\newcommand{\ga}{\alpha}
\newcommand{\gb}{\beta}
\renewcommand{\gg}{\gamma}
\newcommand{\gd}{\delta}
\newcommand{\gw}{\omega}
\newcommand{\gs}{\sigma}
\newcommand{\liff}{\leftrightarrow}
\newcommand{\cantor}{2^\gw}
\newcommand{\baire}{\gw^\gw}
\newcommand{\gwtree}{\gw^{<\gw}}
\newcommand{\dom}{\mathrm{dom}}
\newcommand{\power}{\mathcal{P}}
\newcommand{\hvd}{\mathrm{HOD}}
\newcommand{\vd}{\mathrm{OD}}
\newcommand{\coll}{\mathrm{Coll}}
\newcommand{\eval}{\mathrm{ev}}
\newcommand{\forkindep}[1][]{%
  \mathrel{
    \mathop{
      \vcenter{
        \hbox{\oalign{\noalign{\kern-.3ex}\hfil$\vert$\hfil\cr
              \noalign{\kern-.7ex}
              $\smile$\cr\noalign{\kern-.3ex}}}
      }
    }\displaylimits_{#1}
  }
}
\newtheorem{theorem}{Theorem}[section]
\newtheorem{claim}[theorem]{Claim}
\newtheorem{corollary}[theorem]{Corollary}
\newtheorem{fact}[theorem]{Fact}
\newtheorem{proposition}[theorem]{Proposition}
\theoremstyle{definition}
\newtheorem{definition}[theorem]{Definition}
\newtheorem{question}[theorem]{Question}
\title{Independence relations in the Solovay model I\footnote{2020 AMS subject classification 03E25, 03E30.}}
\author{
Jind{\v r}ich Zapletal\\
University of Florida\\
zapletal@ufl.edu}
\begin{document}
\maketitle

\begin{abstract}
I provide a novel axiomatization of the Solovay model using the concept of an independence relation with properties familiar from geometric model theory. This serves as a tool for forcing-free proofs of many classical results as well as a basis for a purely geometric treatment of the theory of balanced forcing.
\end{abstract}

\section{Introduction}

The Solovay model \cite{solovay:model} \cite[Chapter 26]{jech:newset} \cite[Chapter 8]{schindler:set} is one of the most thoroughly studied objects in set theory. It was developed by Solovay as a model of choiceless set theory in which all sets of reals are Lebesgue measurable. Its usual construction provides a way of analyzing its theory in great detail. However, this approach uses forcing in a way which makes proofs rather complex and repetitive at the same time.

In \cite{z:geometric}, Paul Larson and the present author developed a flexible theory of forcing extensions of the Solovay model called balanced forcing. The theory uses the standard approach to the Solovay model and results in rather repetitive and unwieldy proofs. The purpose of this series of papers is to provide a ``geometric" axiomatization of the Solovay model, and then develop a novel presentation of the theory of balanced forcing in the Solovay model in a similar spirit. The word ``geometric" here is to suggest a parallel with geometric model theory and the use of independence relations between models as in \cite{pillay:simple}. The axiomatization provided is in no sense complete, but it enables a novel and nearly forcing-free style of arguments. In this paper, I prove older results about the Solovay model such as measurability of all sets of reals (\cite{solovay:model}, Theorem~\ref{solovaytheorem}), the Ramsey property of subsets of $[\gw]^{\aleph_0}$ (\cite{mathias:happy}, Theorem~\ref{mathiastheorem}), nonexistence of MAD families (\cite{tornquist:mad, mathias:happy}, Theorem~\ref{tornquisttheorem}), nonexistence of uncountable sequences of Borel sets of bounded complexity (\cite{stern:borel}, Theorem~\ref{sterntheorem}) or a version of the Silver dichotomy (\cite{stern:borel}, Theorem~\ref{silvertheorem}) from this axiomatic basis, all with short, forcing-free and even descriptive set theory-free arguments. In the continuation, I provide a geometric presentation of balanced forcing which is methodologically much simpler than that of \cite{z:geometric}; in addition, it applies to much broader class of posets and does away with the needless complexity calculations which plague most of that book.

As for the structure of this paper, in Section~\ref{definabilitysection} I discuss the language of definability. Section~\ref{independencesection} isolates the properties of an independence relation needed for the analysis of the Solovay model, remarkably similar to forking independence. In Section~\ref{inaccessibilitysection}, I isolate the inaccessibility axiom and show that in conjunction with the independence axiom, it has many mathematical consequences; in particular, it implies that no MAD families exist. In Section~\ref{bairesection}, I consider the axiom stating that every set of reals has the Baire property. In the presence of the independence relation, this is a strong statement, implying in particular that all sets of reals are Lebesgue measurable, and that the independence relation is essentially mutual genericity. In Section~\ref{ghasection}, I consider a Gandy--Harrington type of a game and use the associated axiom to prove the Silver dichotomy. This is one axiom which is not geometric in nature, but it provides quick and clutter-free proofs of several dichotomy theorems. Finally in Section~\ref{modelsection}, I discuss various slightly different types of the Solovay model and show that the (usual version of the) Solovay model satisfies all axioms on the list.

The terminology and notation used in the paper follows the set theoretic standard of \cite{jech:newset}. During the work on this paper, the author was partially supported by NSF grant DMS 2348371.

\section{The definability axiom}
\label{definabilitysection}

The basic axiomatic framework for this paper is ZF+DC, where DC stands for the axiom of dependent choices. The first additional axiom, without which the independence axiomatization has no teeth, is the definability axiom.

\begin{definition}
The \emph{definability axiom} is the statement that every set is definable from a set of ordinals.
\end{definition}

\noindent By ``definable", I mean definable in the universe of sets using the first order language with the membership relation.

 I use this opportunity to set up some notation. If $x$ is a set of ordinals then the symbol $\vd_x$ denotes the class of all sets which are definable from $x$ and an additional finite tuple of ordinals. If $Z$ is a Polish space, by $Z\in\vd_x$ I will always mean to say that $\vd_x$ contains the underlying set as well as some countable basis of the topology of $Z$. The symbol $\hvd_x$ denotes the transitive part of $\vd_x$; that is, the class of all sets $z$ such that the transitive closure of $z$ is a subset of $\vd_x$. The following are standard facts.

\begin{fact}
Let $x$ be a set of ordinals.

\begin{enumerate}
\item there is a class well-ordering on $\vd_x$ definable from $x$;
\item $\hvd_x$ is a model of ZFC.
\end{enumerate}
\end{fact}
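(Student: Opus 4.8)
The statement to prove is the \textbf{Fact}: for $x$ a set of ordinals, (1) there is a class well-ordering on $\vd_x$ definable from $x$, and (2) $\hvd_x$ is a model of ZFC. These are indeed standard facts, so I would present the proof as a careful recollection of the standard constructions, adapted to the relativized-to-$x$ setting.

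\medskip

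For part (1), the plan is to mimic the classical construction of the canonical well-ordering of $L$, but using the reflection theorem and Gödel's definability-within-a-set mechanism instead. First I would fix, via the reflection theorem (available in ZF), a definable-from-$x$ class function $\alpha \mapsto V_{\beta(\alpha)}$ assigning to each ordinal $\alpha$ a rank-initial segment $V_{\beta(\alpha)}$ that reflects enough formulas, so that every element of $\vd_x$ is definable-with-parameters-$x$-and-finitely-many-ordinals over some such $V_{\beta(\alpha)}$. Then, using the arithmetization of syntax, I would define, for each $\alpha$, a well-ordering of the (set-many) subsets of $V_{\beta(\alpha)}$ definable over $(V_{\beta(\alpha)},\in,x\cap V_{\beta(\alpha)})$ with ordinal parameters below $\beta(\alpha)$: order first by the rank $\alpha$ at which the set first appears, then by the Gödel number of the defining formula, then lexicographically by the tuple of ordinal parameters. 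Stitching these local well-orderings together along the ordinals yields a class well-ordering of $\vd_x$, and every step of this construction refers only to $x$ and absolute notions, so the resulting relation is definable from $x$. The one point requiring a little care is checking that $\vd_x$ is exactly the union of these local definable-power-set stages --- this is where reflection is used, to push any finite tuple of ordinal parameters below some $\beta(\alpha)$.

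\medskip

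For part (2), I would verify the ZFC axioms for $\hvd_x$ one group at a time. Extensionality, Foundation, Pair, Union, and Infinity are immediate from transitivity of $\hvd_x$ together with the observation that $\vd_x$ is closed under the relevant definable operations (e.g.\ if $z,w \in \hvd_x$ then $\{z,w\}$ is definable from $z,w$ hence from $x$ and finitely many ordinals, and is transitive-closed). For Separation and Replacement, the key is that $\vd_x$ satisfies a reflection/collection principle relative to itself: given a formula $\varphi$ and parameters in $\hvd_x$, the witnesses one needs are again definable from $x$ and the (ordinal codes of the) parameters, so they lie in $\vd_x$; transitivity then places the resulting set in $\hvd_x$. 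Power Set requires noting that if $z \in \hvd_x$ then $\mathcal P(z) \cap \hvd_x$ is itself definable from $x$ and a code for $z$, hence an element of $\vd_x$, hence (being transitive-closed) of $\hvd_x$. Finally, Choice holds because the class well-ordering from part (1), restricted to any set in $\hvd_x$, is itself definable from $x$ and therefore an element of $\hvd_x$, giving a well-ordering inside the model.

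\medskip

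I expect the main obstacle --- really the only nontrivial point --- to be the bookkeeping in part (1): making the reflection argument precise enough that ``definable from $x$ and finitely many ordinals'' is genuinely captured by a single definable-from-$x$ class relation, rather than an external union of definable pieces. Once that is set up cleanly, part (2) is a routine closure-under-definable-operations verification, and I would present it tersely, citing the standard treatment of $\hvd$ (e.g.\ \cite[Chapter 13]{jech:newset}) with the remark that relativizing every definability clause to additionally allow the parameter $x$ changes nothing essential.
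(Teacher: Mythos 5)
Your proposal is correct and follows exactly the standard route that the paper itself defers to: it states these as ``standard facts'' and offers no proof, implicitly citing the classical treatment of $\vd$ and $\hvd$ (reflection plus G\"odel numbering to get the definable well-ordering, then the usual closure-under-definable-operations verification of ZFC in $\hvd_x$, with Choice coming from the restricted well-ordering). Your relativization to the parameter $x$ is handled properly, so there is nothing to add beyond what you wrote.
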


\noindent I will abuse the above notation a bit. I will identify sets of ordinals and finite tuples of sets of ordinals. Thus, if $x$ and $y$ are sets of ordinals, then $\vd_{xy}$ stands for the class of all sets definable from $x$, $y$, and an additional finite tuple of ordinals. Also, if $y\subset\vd_x$, I will write $\vd_{xy}$ as if $y$ is a set of ordinals: $y$ can be identified with the set of the indexes of its elements in the canonical definable well-order of $\vd_x$. Another similar shorthand appears when $Z$ is a Hausdorff space with a well-orderable basis $\mathcal{B}$, and $z\in Z$ is a point. If $x$ is a set of ordinals such that $Z, \mathcal{B}\in\vd_x$, then $\mathcal{B}\subset\vd_x$ will hold as well, by Corollary~\ref{wocorollary} below. In such a situation, the point $z$ can be identified with the set $\{O\in\mathcal{B}\colon z\in O\}\subset\vd_x$, and in this sense I will use the notation $\vd_{xz}$.

The following easy proposition will be used to compactify notation in many arguments of this paper.

\begin{proposition}
Let $x$ be a set of ordinals and $A$ be a set. The following are equivalent:

\begin{enumerate}
\item $A\in\vd_x$;
\item there is a function $F$ definable from a finite tuple of ordinals such that $x\in\dom(F)$ and $F(x)=A$.
\end{enumerate}
\end{proposition}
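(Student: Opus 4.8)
The plan is to prove the two implications separately, with the interesting content living in (1)$\Rightarrow$(2); the reverse direction is essentially a triviality once we recall what $\vd_x$ means.

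\medskip

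\noindent\textbf{(2)$\Rightarrow$(1).} Suppose $F$ is definable from a finite tuple $\bar\alpha$ of ordinals, $x\in\dom(F)$, and $F(x)=A$. Then the formula ``$y = F(x)$'' is a formula (in the language of set theory) with parameters $x$ and $\bar\alpha$ that defines $A$ uniquely. Hence $A$ is definable from $x$ together with a finite tuple of ordinals, i.e. $A\in\vd_x$. Here I would just be careful to note that ``$F$ is definable from $\bar\alpha$'' unpacks to: there is a formula $\varphi(u,v,\bar w)$ such that $F = \{(u,v) : \varphi(u,v,\bar\alpha)\}$, and then $A$ is defined by $\varphi(x, \cdot, \bar\alpha)$ as a function value.

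\medskip

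\noindent\textbf{(1)$\Rightarrow$(2).} This is where the real work is. Assume $A\in\vd_x$, so fix a formula $\varphi(v,u,\bar w)$ and a finite tuple $\bar\alpha$ of ordinals such that $A$ is the unique set satisfying $\varphi(A,x,\bar\alpha)$. The obvious candidate is to let $F(x') = $ ``the unique $v$ such that $\varphi(v,x',\bar\alpha)$'', but this partial function need \emph{not} be definable, because the quantifier ``there is a unique $v$ such that $\varphi(v,x',\bar\alpha)$'' ranges over the whole universe and $v$ could in principle be a proper class's worth of candidates — more to the point, $\{(x',v) : \varphi(v,x',\bar\alpha)\}$ is a definable class, but we need it to actually be a \emph{function}, i.e. a set-sized or at least single-valued relation, and single-valuedness may fail at inputs other than $x$. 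The fix is the standard reflection/$\vd$-coding trick: I would use that by the Fact quoted above there is a definable (from $\bar\alpha$ alone, after absorbing $x$'s role appropriately — or rather, I would instead feed the ordinal parameters into the function) well-ordering of $\vd_{\bar\alpha}$-type classes. Concretely, define $F$ as the class of pairs $(x',v)$ such that $v$ is the \emph{$<$-least} set (in the canonical definable well-order of the $\vd$ hierarchy relative to the ordinal parameters, which by the Fact is definable from $\bar\alpha$) satisfying $\varphi(v,x',\bar\alpha)$, if such a $v$ exists, and undefined otherwise. This $F$ is now definable from $\bar\alpha$ (a finite tuple of ordinals), it is single-valued by the ``least'' clause, $x\in\dom(F)$ since $A$ witnesses the existential, and $F(x) = A$ because $A$ is the \emph{unique} witness at input $x$, hence in particular the least one.

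\medskip

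\noindent The main obstacle, and the only place requiring care, is making the function $F$ genuinely definable and single-valued: one cannot naively take ``the unique witness'' because uniqueness is only guaranteed at the input $x$. The clean way around this is to invoke clause (1) of the Fact — the definable class well-ordering of $\vd_x$ (or, in the form actually needed here, a definable well-ordering allowing us to select canonical witnesses uniformly) — and select the least witness. I would also remark that this is why the statement is phrased with ``a function $F$ definable from a finite tuple of ordinals'' rather than ``definable from $\bar\alpha$'' with $\bar\alpha$ prescribed: the selection mechanism may consume additional ordinal parameters (the ones coding the relevant fragment of the $\vd$ well-order), which is harmless. With the least-witness device in hand, all remaining verifications — that $F$ is a function, that $x\in\dom F$, that $F(x)=A$ — are immediate from the choice of $\varphi$ and $\bar\alpha$.
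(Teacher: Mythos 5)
Your direction (2)$\Rightarrow$(1) is fine, and you correctly identify the only real issue in (1)$\Rightarrow$(2): the class $\{(x',v)\colon\varphi(v,x',\bar\alpha)\}$ need not be single-valued at inputs other than $x$. But your fix --- selecting the $<$-least witness in a well-ordering ``definable from $\bar\alpha$'' --- does not work as stated. By the quoted Fact, a well-ordering definable from the ordinal tuple $\bar\alpha$ alone is a well-ordering of $\vd_{\bar\alpha}=\vd$, i.e.\ of the ordinal-definable sets only. The unique witness $A$ at the input $x$ is guaranteed to lie in $\vd_x$, not in $\vd$: take for instance $\varphi(v,x)$ to be $v=x$ for a set $x$ of ordinals that is not ordinal-definable. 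Then $A=x$ is not in the field of your well-order, so ``the $<$-least witness'' does not exist at the input $x$, and your $F$ fails to satisfy $x\in\dom(F)$ and $F(x)=A$. Your closing remark about absorbing extra ordinal parameters does not repair this, since no finite tuple of ordinals puts $x$ itself into $\vd$.

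The repair is simpler than the machinery you invoke, and it is what the paper does: define $F(y)=B$ if the formula $\varphi(v,y,\bar\alpha)$ has exactly one solution and that solution is $B$, and $F(y)=0$ otherwise. This $F$ is a total, single-valued class function definable from $\bar\alpha$, and $F(x)=A$ because uniqueness holds at $x$ by hypothesis; no well-ordering is needed. (Your least-witness idea can also be salvaged by using the canonical well-order of $\vd_{x'\bar\alpha}$ \emph{uniformly in the argument} $x'$ --- the defining formula of $F$ is allowed to mention its own argument --- but that is strictly more work than the case split.)
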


\begin{proof}
The (2)$\to$(1) implication does not need an argument. For the opposite direction, suppose that $\phi(v, x, \bar\ga)$ is a formula with one free variable displayed and additional parameters $x$ and a finite tuple $\bar\ga$ of ordinals, such that $\phi$ has only one solution and the solution is $A$. Define the (class) function $F$ by $F(y)=B$ if the formula $\phi(v, y, \bar\ga)$ has only one solution and the solution is $B$, and $F(y)=0$ if the formula $\phi(v, y, \bar\ga)$ has no solutions or more than one solutions. It is clear that $F$ works as desired.
\end{proof}

\section{The independence axiom}
\label{independencesection}

The most important contribution of this paper is the axiomatization of set theoretic independence and its uses. It will turn out much later that the independence relation is necessarily essentially equal to mutual genericity (Proposition~\ref{kimpillayproposition}), but one can use the independence relation successfully in arguments axiomatically without ever considering any forcing language.

\begin{definition}
A (set theoretic) \emph{independence relation} is a ternary class relation $\forkindep$ on sets of ordinals satisfying the following properties:

\begin{enumerate}
	\item (nontriviality) $x\forkindep[x]x$ holds for all $x$;
\item (symmetry) $y_0\forkindep[x]y_1$ implies $y_1\forkindep[x]y_0$;
\item (monotonicity) if $x, y_0, y_1, y'_0$ are sets of ordinals and $y'_0\in\vd_{xy_0}$ then $y_0\forkindep[x]y_1$ implies $y'_0\forkindep[x]y_1$;
\item (transitivity) $y_0\forkindep[x]y_1$ and $z\forkindep[xy_0]y_1$ implies $y_0z\forkindep[x]y_1$;
\item (extension) if $x$ is a set of ordinals and $A\in\vd_x$ is a nonempty set of sets of ordinals, then there are $y_0, y_1\in A$ such that $y_0\forkindep[x]y_1$ holds;
\item (product) if $y_0\forkindep[x]y_1$ holds, $A\in\vd_x$, and $\langle y_0, y_1\rangle\in A$ holds, then there are $\vd_x$-sets $B_0, B_1$ such that $y_0\in B_0$, $y_1\in B_1$, and for every $y'_0\in B_0$ and $y'_1\in B_1$, $y'_0\forkindep[x]y'_1$ implies $\langle y'_0, y'_1\rangle\in A$. 
\end{enumerate}
\end{definition}

\begin{definition}
The \emph{independence axiom} asserts that there is a class independence relation $\forkindep$ definable from no parameters.
\end{definition}

\noindent The properties stated are motivated by their counterparts in model theory \cite[Section 6.3]{marker:book}, \cite{pillay:simple}. The extension item will get a more familiar form in the next proposition. It may be satisfied in a trivial way: if $A$ has a $\vd_x$ element $y$ (for example, if $A$ is a singleton), then $y_0=y_1=y$ is going to work. The product item says that the theory of an independent pair indexed by $x$ and ordinals depend continuously on the theories of its left and right coordinates.

In this section, I will derive a couple of basic consequences of the independence axiom which will make it easier to use it in applications. All statements are proved in ZF+DC+definability+independence. In the statement of the extension and product properties, the set $A$ can be replaced by a \emph{class} definable from $x$ and a finite tuple of ordinals without changing the truth value of the statement. The extension property is typically used via its following apparent strengthening:

\begin{proposition}
\label{extensionproposition}
Let $x, y_0$ be sets of ordinals.

\begin{enumerate}
\item If $A\in\vd_x$ is a nonempty set of sets of ordinals, then there is $y_1\in A$ such that $y_0\forkindep[x]y_1$ holds;
\item $x\forkindep[x]y_0$ holds.
\end{enumerate}
\end{proposition}

\begin{proof}
Suppose towards a contradiction that (1) fails for $x, y_0$ and $A$; there must be an ordinal $\ga$ such that the set $B=\{y\subset\ga\colon\forall z\in A\ \lnot y\forkindep[x]z\}$ is nonempty. The set $B\times A$ is $\vd_x$ and nonempty, so by the extension property applied to it there are pairs $\langle y_0, z_0\rangle$ and $\langle y_1, z_1\rangle$ in $B\times A$ such that $y_0z_0\forkindep[x]y_1z_1$. The monotonicity property applied twice then shows that $y_0\forkindep[x]z_1$, contradicting the choice of $y_0$.

Suppose towards a contradiction that (2) fails. Then there is an ordinal $\ga$ such that the set $B=\{y\subset\ga\colon \lnot x\forkindep[x]y\}$ is nonempty. By (1), there must be a set $y\in B$ such that $x\forkindep[x]y$ holds, contradicting the definition of the set $B$.
\end{proof}

\noindent Now I turn to the transitivity property. The following proposition shows among other things that the transitivity holds in its equivalence form more familiar from model theory. 

\begin{proposition}
	\label{transitivityproposition}
	Let $x, y_0, y_1, z$ be sets of ordinals such that $y_0\forkindep[x]y_1$.
	
	\begin{enumerate}
		\item $z\forkindep[xy_0]y_1$ is equivalent with $y_0z\forkindep[x]y_1$;
		\item if $z\in\vd_{xy_0}$ then $y_0\forkindep[xz]y_1$;
		\item if $z\forkindep[x]y_0y_1$ then $y_0z\forkindep[x]y_1$.
	\end{enumerate}
\end{proposition}

\begin{proof}
	For (1), the left-to-right implication is the transitivity property. To prove the right-to-left implication, suppose that it fails. By the product property, there must be $\vd_x$-sets $B_0$ and $B_1$ such that $\langle y_0, z\rangle\in B_0$, $y_1\in B_1$, and for all $\langle y'_0, z'\rangle\in B_0$ and $y'_1\in B_1$, if $y'_0z'\forkindep[x]y'_1$ then $z'\forkindep[xy'_0]y'_1$ fails. Let $C$ be the projection of $B_0$ into the first coordinate. Use the extension property to find $y'_0\in C$ and $y'_1\in B_1$ such that $y'_0\forkindep[x]y'_1$. Use the extension property again to find $z'\in (B_0)_{y'_0}$ such that $z'\forkindep[xy'_0]y'_1$. The transitivity property implies that $y'_0z'\forkindep[x]y'_1$. Since $z'\forkindep[xy'_0]y'_1$ holds, this is a contradiction with the choice of the sets $B_0$ and $B_1$.
	
	For (2), first observe that $y_0z\forkindep[x]y_1$ holds by monotonicity, and then apply (1) with the roles of $y_0$ and $z$ interchanged. For (3), apply (1) twice, first pushing $y_0$ into the subscript of the $\forkindep$ sign and then to the $z$-side of it.
\end{proof}

\begin{corollary}
	If $x$ is a set of ordinals, $\bar y$ and $\bar z$ are $n$-tuples of sets of ordinals for some $n\in\gw$, and $A\in\vd_x$ is a nonempty set of sets of ordinals, then there is $u\in A$ such that for all $i\in n$, $u\forkindep[x\bar y(i)]\bar z(i)$.
\end{corollary}

\begin{proof}
	Use the extension property to find $u\in A$ such that $u\forkindep[x]\bar y\bar z$. Then, for each $i\in n$, $u\forkindep[x]\bar y(i)\bar z(i)$ holds by monotonicity, and $u\forkindep[x\bar y(i)]\bar z(i)$ follows from Proposition~\ref{transitivityproposition}(2).
\end{proof}

\noindent The transitivity property makes it possible to talk about independent tuples and produce them in a staightforward inductive manner.

\begin{definition}
	Let $x$ be a set of ordinals and $\bar y$ be a finite tuple of sets of ordinals. Say that $\bar y$ is \emph{independent over} $x$ if either $\bar y$ has at most one element, or for any two non-overlapping sub-tuples $\bar u_0, \bar u_1$ of $\bar y$, $\bar u_0\forkindep[x]\bar u_1$ holds.
\end{definition}

\begin{proposition}
	\label{tupleproposition}
	Let $x$ be a set of ordinals and $\bar y_0, \bar y_1$ be finite tuples of ordinals. The following are equivalent:
	
	\begin{enumerate}
		\item $\bar y_0^\smallfrown\bar y_1$ is independent over $x$;
		\item $\bar y_0, \bar y_1$ are each independent over $x$ and $\bar y_0\forkindep[x]\bar y_1$.
	\end{enumerate}
\end{proposition}

\begin{proof}
	(1)$\to$(2) follows immediately from the definitions. The opposite implication is proved by induction on the length of $\bar y_0$, moving the elements of $\bar y_0$ and $\bar y_1$ to the left or right of the $\forkindep$ sign using Proposition~\ref{transitivityproposition}(3).
\end{proof}

\begin{corollary}
	\label{tuplecorollary}
	Let $x$ be a set of ordinals, $n\in\gw$ be a number, and $\bar B$ be an $n$-tuple of nonempty $\vd_x$-sets of sets of ordinals. There is an $n$-tuple $\bar y$ independent over $x$ such that its entries belong to the corresponding entries on $\bar B$.
\end{corollary}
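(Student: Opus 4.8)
The plan is to prove the statement by induction on $n$. The cases $n=0$ and $n=1$ are immediate: the empty tuple and any one-element tuple $\langle y_0\rangle$ with $y_0\in B_0$ are independent over $x$ directly from the definition.

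For the inductive step, assume the statement for $n$ and let $\bar B=\langle B_0,\dots,B_n\rangle$ be an $(n{+}1)$-tuple of nonempty $\vd_x$-sets of sets of ordinals. First I would apply the inductive hypothesis to the initial segment $\langle B_0,\dots,B_{n-1}\rangle$ to obtain an $n$-tuple $\bar y=\langle y_0,\dots,y_{n-1}\rangle$ independent over $x$ with $y_i\in B_i$ for each $i<n$. Identifying the finite tuple $\bar y$ with a single set of ordinals (as is done throughout the paper), I would then invoke Proposition~\ref{duplicationproposition} with $\bar y$ in the role of $y_0$ and $B_n$ in the role of $A$ — here $B_n\in\vd_x$ is nonempty — to produce some $y_n\in B_n$ with $\bar y\forkindep[x]y_n$.

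It then remains to verify that $\bar y{}^\smallfrown\langle y_n\rangle$ is independent over $x$, and this is exactly what Proposition~\ref{tupleproposition} delivers, applied with $\bar y_0=\bar y$ and $\bar y_1=\langle y_n\rangle$: the tuple $\bar y$ is independent over $x$ by construction, the one-element tuple $\langle y_n\rangle$ is independent over $x$ trivially, and $\bar y\forkindep[x]y_n$ was just arranged. Hence $\bar y{}^\smallfrown\langle y_n\rangle$ is an $(n{+}1)$-tuple independent over $x$ whose entries lie in the corresponding entries of $\bar B$, which closes the induction.

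The only thing that needs a moment's care — the closest this proof comes to an obstacle — is the notational identification of a finite tuple of sets of ordinals with a single set of ordinals, so that Propositions~\ref{duplicationproposition} and~\ref{tupleproposition} apply with a tuple where a single set of ordinals is nominally expected; but this convention is already in force. One should also observe that $\langle B_0,\dots,B_n\rangle$ and each of its initial segments lie in $\vd_x$, since $n$ is finite and each $B_i\in\vd_x$, so that the inductive hypothesis is legitimately applicable at each stage.
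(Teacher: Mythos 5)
Your proof is correct and is precisely the argument the paper intends: induction on $n$, using Proposition~\ref{duplicationproposition} to extend the tuple by an element of $B_n$ independent from the initial segment, and Proposition~\ref{tupleproposition} to conclude that the extended tuple is independent over $x$. The paper's own proof is only a one-line sketch citing these same two propositions, and your write-up supplies the intended details, including the necessary identification of a finite tuple with a single set of ordinals.
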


\begin{proof}
	By induction on $n$ for each $x$ simultaneously. The induction step is performed with the help of Propositions~\ref{tupleproposition} and~\ref{extensionproposition}.
\end{proof}

\begin{corollary}
	\label{duplecorollary}
	Let $x$ be a set of ordinals, $n\in\gw$ be a number, $\bar y$ be an $n$-tuple of sets of ordinals independent over $x$, and $A$ be a $\vd_x$ set containing $\bar y$. Then there is an $n$-tuple $\bar B$ of $\vd_x$-sets such that $\forall i\in n\ \bar y(i)\in\bar B(i)$, and such that for every $n$-tuple $\bar y'$ independent over $x$ such that $\forall i\in n\ \bar y'(i)\in\bar B$, $\bar y'\in A$ holds.
\end{corollary}

\begin{proof}
	By induction on $n$. The case of $n=1$ is trivial. For the induction step, suppose that $\bar y$ is an independent $n+1$-tuple in some $\vd_x$-set $A$. Use the product property to find sets $C_0$ and $C_1$ such that $\bar y_0\restriction n\in C_0$, $\bar y_0(n)\in C_1$, $C_0$ consists of independent $n$-tuples, and for every $\bar u\in C_0$ and $v\in C_1$, if $\bar u\forkindep[x] v$ then $\bar u^\smallfrown v\in A$. Use the induction hypothesis on $C_0$ to find the related $n$-tuple $\bar B\restriction n$, and finally obtain $\bar B$ by appending $C_1$ to $\bar B\restriction n$.
\end{proof}

\noindent The product property has a number of immediate consequences which are going to be used repeatedly throughout this paper.

\begin{proposition}
\label{separationproposition}
If $y_0\forkindep[x]y_1$ and $A_0\in\vd_{xy_0}$ and $A_1\in\vd_{xy_1}$ are disjoint subsets of $\vd_x$ then there is a set $D\in\vd_x$ such that $A_0\subseteq D$ and $A_1\cap D=0$.
\end{proposition}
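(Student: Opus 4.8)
The plan is to obtain $D$ from a single application of the product property, with the duplication property (in the form of Proposition~\ref{duplicationproposition}) used to patch up the one place where that alone is not enough. First I would fix, via the proposition at the end of Section~\ref{definabilitysection}, total class functions $F_0,F_1$, each definable from a finite tuple of ordinals, with $F_0(\langle x,y_0\rangle)=A_0$ and $F_1(\langle x,y_1\rangle)=A_1$; this is possible since $A_i\in\vd_{xy_i}$ and $\langle x,y_i\rangle$ is coded as a set of ordinals. Replacing $F_i(\langle x',y'\rangle)$ by $F_i(\langle x',y'\rangle)\cap\vd_{x'}$, I may also assume every value $F_i(\langle x,\cdot\rangle)$ is a subset of $\vd_x$ (harmless, since $A_i\subseteq\vd_x$, and needed only so that the set $D$ below lands inside $\vd_x$). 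Now let $A$ be the class of pairs $\langle y'_0,y'_1\rangle$ of sets of ordinals with $F_0(\langle x,y'_0\rangle)\cap F_1(\langle x,y'_1\rangle)=0$; this class is definable from $x$ and finitely many ordinals, and $\langle y_0,y_1\rangle\in A$ because $A_0\cap A_1=0$. Since $y_0\forkindep[x]y_1$, the product property (with a class in place of the set $A$, as noted after its statement) yields $\vd_x$-sets $B_0\ni y_0$ and $B_1\ni y_1$ such that $y'_0\in B_0$, $y'_1\in B_1$ and $y'_0\forkindep[x]y'_1$ imply $F_0(\langle x,y'_0\rangle)\cap F_1(\langle x,y'_1\rangle)=0$.

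I would then put $D=\{z:(\exists y'_0\in B_0)\,z\in F_0(\langle x,y'_0\rangle)\ \wedge\ (\forall y'_1\in B_1)\,z\notin F_1(\langle x,y'_1\rangle)\}$. It is a subset of the set $\bigcup_{y'_0\in B_0}F_0(\langle x,y'_0\rangle)$ (a set, by replacement), hence a set; it lies in $\vd_x$, being definable from $x$, the $\vd_x$-sets $B_0,B_1$, and finitely many ordinals; and it is contained in $\vd_x$ by the normalization. The inclusion $A_1\cap D=0$ is immediate: for $z\in A_1=F_1(\langle x,y_1\rangle)$, the element $y_1\in B_1$ falsifies the second conjunct. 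For $A_0\subseteq D$: given $z\in A_0$, the first conjunct is witnessed by $y_0\in B_0$; and if the second conjunct failed, then $C=\{y'_1\in B_1:z\in F_1(\langle x,y'_1\rangle)\}$ would be a nonempty $\vd_x$-set of sets of ordinals (using $z\in A_0\subseteq\vd_x$), so by Proposition~\ref{duplicationproposition} there would be $y'_1\in C$ with $y_0\forkindep[x]y'_1$, and then the product-property clause would force $F_0(\langle x,y_0\rangle)\cap F_1(\langle x,y'_1\rangle)=0$, contradicting $z\in A_0\cap F_1(\langle x,y'_1\rangle)$.

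The step I expect to be the real obstacle is exactly the inclusion $A_0\subseteq D$, together with the choice of the right definition of $D$. The naive attempt $D=\bigcup_{y'_0\in B_0}F_0(\langle x,y'_0\rangle)$ fails because a typical member of $B_0$ need not be independent over $x$ from $y_1$, so nothing prevents such a union from meeting $A_1$; one is therefore forced to subtract from it everything covered along $B_1$ by $F_1$. But then $A_0\subseteq D$ is no longer obvious, since a bad point $z\in A_0$ might lie in $F_1(\langle x,y'_1\rangle)$ for some $y'_1\in B_1$ not independent from $y_0$. The resolution is to use the duplication property to \emph{replace} such a $y'_1$ by one that \emph{is} independent from $y_0$ while still witnessing $z\in F_1(\langle x,\cdot\rangle)$, after which the product property bites. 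Everything else — that $A$, $C$, and $D$ are definable from the advertised parameters, and that $D$ is a set — is routine bookkeeping.
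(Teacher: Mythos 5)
Your proof is correct and is essentially the paper's argument: both apply the product property to the class of pairs with disjoint $F_0$/$F_1$-values to get $B_0,B_1$, and both use Proposition~\ref{duplicationproposition} to upgrade a troublesome witness to one independent from the relevant side so the product clause applies. The only difference is cosmetic --- the paper takes $D=\bigcup_{y'_0\in B_0}F_0(x,y'_0)$ (so $A_0\subseteq D$ is trivial and the duplication trick proves $A_1\cap D=0$), whereas you subtract the $F_1$-covered part (so $A_1\cap D=0$ is trivial and the duplication trick proves $A_0\subseteq D$).
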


\begin{proof}
Let $F_0$ and $F_1$ be $\vd$ functions such that $F_0(x, y_0)=A_0$ and $F_1(x, y_1)=A_1$. Using the product property, let $B_0$ and $B_1$ be $\vd_x$ sets such that $y_0\in B_0$, $y_1\in B_1$, and for every $y'_0\in B_0$ $F_0(x, y_0)\subset\vd_x$ holds, for every $y'_1\in B_1$ $F_1(x, y'_1)\in\vd_x$ holds, and $y'_0\forkindep[x]y'_1$ implies $F_0(x, y'_0)\cap F_1(x, y'_1)=0$. Let $D=\bigcup\{F_0(x, y'_0)\colon y'_0\in B_0\}$. This is clearly a $\vd_x$ set which contains $A_0$ as a subset. Suppose towards a contradiction that $A_1\cap D$ is nonempty, containing some element $C$. Consider the nonempty $\vd_x$ set $B'_1=\{y'_1\in B_1\colon C\in F_1(x, y'_1)\}$. Let $y'_0\in B_0$ be an element such that $C\in F_0(x, y'_0)$. Use Proposition~\ref{extensionproposition} to find $y'_1\in B'_1$ such that $y'_0\forkindep[x]y'_1$. Then $C\in F_0(x, y'_0)\cap F_1(x, y'_1)$, contradicting the choice of the sets $B_0, B_1$.
\end{proof}

\begin{proposition}
\label{intersectionproposition}
$y_0\forkindep[x]y_1$ implies $\vd_{xy_0}\cap\vd_{xy_1}=\vd_x$.
\end{proposition}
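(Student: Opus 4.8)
The inclusion $\vd_x\subseteq\vd_{xy_0}\cap\vd_{xy_1}$ is automatic, so the content is the reverse inclusion. The plan is to take $z\in\vd_{xy_0}\cap\vd_{xy_1}$ and use the product property to pin $z$ down over $\vd_x$. First I would invoke the proposition characterizing $\vd$-membership through definable functions to fix functions $F_0,F_1$, each definable from a finite tuple of ordinals, with $F_0$ applied to the (coded) pair $\langle x,y_0\rangle$ equal to $z$ and $F_1$ applied to $\langle x,y_1\rangle$ equal to $z$. Below I write $F_i(x,u)$ for the value of $F_i$ at the pair $\langle x,u\rangle$, keeping $x$ fixed as a parameter; thus $F_0(x,y_0)=z=F_1(x,y_1)$.

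Next I would form the class $C=\{\langle u_0,u_1\rangle\colon F_0(x,u_0)$ and $F_1(x,u_1)$ are both defined and $F_0(x,u_0)=F_1(x,u_1)\}$, which is definable from $x$ together with the ordinal parameters of $F_0$ and $F_1$, and which contains $\langle y_0,y_1\rangle$. Applying the product property (with the class $C$ in place of the set $A$, as the remark after the definition of the independence relation permits) yields $\vd_x$-sets $B_0,B_1$ with $y_0\in B_0$, $y_1\in B_1$ such that for all $u_0\in B_0$ and $u_1\in B_1$, $u_0\forkindep[x]u_1$ implies $\langle u_0,u_1\rangle\in C$, i.e.\ $F_0(x,u_0)=F_1(x,u_1)$. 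Intersecting $B_0$ and $B_1$ with the $x$-definable class of sets of ordinals, I may assume that $B_0$ and $B_1$ consist of sets of ordinals.

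The crux is to show that $u_0\mapsto F_0(x,u_0)$ is constant on $B_0$. Given $u_0,u_0'\in B_0$, I would let $w$ be the set of ordinals coding the pair $\langle u_0,u_0'\rangle$, use Proposition~\ref{duplicationproposition} applied to $B_1$ and $w$ to obtain $u_1\in B_1$ with $w\forkindep[x]u_1$, and then, since $u_0,u_0'\in\vd_{xw}$, apply the heredity property to conclude $u_0\forkindep[x]u_1$ and $u_0'\forkindep[x]u_1$; the defining property of $B_0,B_1$ then gives $F_0(x,u_0)=F_1(x,u_1)=F_0(x,u_0')$. Since $y_0\in B_0$ and $F_0(x,y_0)=z$, this constant value is $z$, so $\{z\}=\{F_0(x,u_0)\colon u_0\in B_0\}$; the right-hand side is a set definable from $x$, the $\vd_x$-set $B_0$, and the ordinal parameters of $F_0$, whence $\{z\}\in\vd_x$ and therefore $z\in\vd_x$.

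I expect the main obstacle to be precisely this constancy step. It is easy to combine the product property with Proposition~\ref{duplicationproposition} to place $z$ inside \emph{some} $\vd_x$-set (for instance $\{F_1(x,u_1)\colon u_1\in B_1\}$), but upgrading this to $z\in\vd_x$ really requires the value of $F_0(x,\cdot)$ to be uniform on $B_0$ — one cannot simply evaluate at a point of $B_0$, since $B_0$ need not contain any $\vd_x$-element. The small device that makes uniformity available is the trick of coding two parameters $u_0,u_0'$ into a single set of ordinals $w$ so that, via heredity, one $u_1\in B_1$ can be made independent from both at once. (For $z$ a set of ordinals one could instead separate $z$ from its complement in a large enough ordinal using Proposition~\ref{separationproposition}, those two sets being disjoint subsets of $\vd_x$ for free; but that route needs a separate coding argument for arbitrary sets $z$, which the product-property argument above avoids.)
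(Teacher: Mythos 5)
Your proof is correct and follows essentially the same route as the paper: fix definable functions $F_0,F_1$ with $F_0(x,y_0)=z=F_1(x,y_1)$, apply the product property to the class of pairs with matching values, and establish constancy of $F_0(x,\cdot)$ on $B_0$ by coding two parameters into one set of ordinals, applying Proposition~\ref{duplicationproposition}, and using heredity. The only cosmetic difference is that the paper compares an arbitrary $y_0'\in B_0$ directly against the fixed $y_0$, whereas you compare two arbitrary elements of $B_0$ and then evaluate at $y_0$; the underlying device is identical.
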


\begin{proof}
Suppose that $A\in\vd_{xy_0}\cap\vd_{xy_1}$ is a set; I must show that $A\in\vd_x$ holds. Let $F_0, F_1$ be $\vd$ functions such that $A=F_0(x, y_0)=F_1(x, y_1)$. Use the product property to find $\vd_x$-sets $B_0, B_1$ such that $y_0\in B_0$, $y_1\in B_1$, and for all $y'_0\in B_0$ and $y'_1\in B_1$, $y'_0\forkindep[x]y'_1$ implies $F_0(x, y'_0)=F_1(x, y'_1)$. I will show that for all $y'_0\in B_0$, $F_0(x, y'_0)=A$; this certainly makes $A$ definable as the unique $F_0$ value on $B_0$ and proves the proposition.

Let $y'_0\in B_0$ be arbitrary. Use Proposition~\ref{extensionproposition} to find $y'_1\in B_1$ such that $y_0y'_0\forkindep[x] y'_1$. The monotonicity property implies that both $y_0\forkindep[x]y'_1$ and $y'_0\forkindep[x]y'_1$ hold. The choice of the sets $B_0, B_1$ then implies that $F_0(x, y_0)=F_1(x, y'_1)=F_0(x, y'_0)$ and the latter value has to be $A$ by the transitivity of equality.
\end{proof}

\begin{corollary}
\label{wocorollary}
If $x$ is a set of ordinals and $A\in\vd_x$ is a set, the following are equivalent:

\begin{enumerate}
\item $A$ is well-orderable;
\item $A\subset\vd_x$.
\end{enumerate}
\end{corollary}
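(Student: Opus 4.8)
The plan is to prove the two implications separately, with the easy direction being (2)$\to$(1) and the substantive direction being (1)$\to$(2).

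For (2)$\to$(1): if $A \subset \vd_x$, then since $\vd_x$ carries a class well-ordering definable from $x$ (Fact, part 1), the restriction of that well-ordering to the set $A$ is a genuine set well-ordering of $A$. So $A$ is well-orderable. This needs no appeal to the independence axiom.

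For (1)$\to$(2): suppose $A \in \vd_x$ is well-orderable but, towards a contradiction, $A \not\subset \vd_x$; fix $a \in A$ with $a \notin \vd_x$. Since $A$ is well-orderable and $A \in \vd_x$, the hypothesis of the duplication property can be leveraged, but the cleaner route is this: well-orderability of $A$ means there is an injection of $A$ into some ordinal $\gamma$, and I would like to argue that any such injection already forces $A \subset \vd_x$. The key idea is to use the independence relation to "duplicate" the element $a$ over $x$. Think of $a$ coded as a set of ordinals (replacing $A$ by the $\vd_x$-set $A'$ of all sets of ordinals coding elements of $A$ under a fixed $\vd_x$-definable coding, if needed). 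Apply Proposition~\ref{duplicationproposition}, or directly the duplication property, to the nonempty $\vd_x$-set $A'$ to obtain $a_0, a_1 \in A'$ with $a_0 \forkindep[x] a_1$. The point is that if $A$ is well-orderable then it admits a $\vd_x$-definable-from-a-code injection into an ordinal, so each element of $A$ is definable from $x$ together with its image — but its image is an ordinal, hence in $\vd_x$, forcing the element itself into $\vd_x$. Phrased without the coding: a well-ordering $\preceq$ of $A$ (as an abstract set) need not be in $\vd_x$, so the real argument must instead show that any two distinct elements of $A$ are independent over $x$ and then derive a contradiction with Proposition~\ref{intersectionproposition}.

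The honest skeleton I would write is: assume $A \in \vd_x$ is well-orderable with witnessing bijection $g \colon \kappa \to A$ for some ordinal $\kappa$ (not necessarily in $\vd_x$). For each $\alpha < \kappa$, the element $g(\alpha)$ lies in $\vd_{x g\restriction(\alpha+1)}$ trivially, which is not yet useful; instead, consider the $\vd_x$-set of \emph{all} bijections from ordinals onto $A$, which is nonempty, and use duplication to get two such bijections $g_0 \forkindep[x] g_1$; by Proposition~\ref{intersectionproposition}, $\vd_{x g_0} \cap \vd_{x g_1} = \vd_x$. Now I would argue that $A \subset \vd_{x g_0}$ (each element is $g_0$ of an ordinal, and ordinals are in $\vd_x \subset \vd_{x g_0}$) and likewise $A \subset \vd_{x g_1}$, hence $A \subset \vd_x$. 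The main obstacle is precisely the bookkeeping that turns "well-orderable" into "there is a nonempty $\vd_x$-definable set of surjections onto $A$ from an ordinal": this needs that $A \in \vd_x$ so that the collection of such surjections is itself $\vd_x$-definable (it is defined from $x$ and $A$, and $A \in \vd_x$), and that it is nonempty (this is exactly well-orderability). Once that is in place, duplication plus Proposition~\ref{intersectionproposition} close the argument immediately, and heredity is not even needed in this direction beyond the trivial observation $\vd_x \subseteq \vd_{xg_i}$.
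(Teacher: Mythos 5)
Your argument is correct in essence and takes the same route as the paper: for (1)$\to$(2) you duplicate a witness to the well-orderability of $A$ over $x$ and then apply Proposition~\ref{intersectionproposition} to get $A\subset\vd_{xg_0}\cap\vd_{xg_1}=\vd_x$. The one step that does not go through as literally written is applying the duplication property to ``the $\vd_x$-set of all bijections from ordinals onto $A$'': the relation $\forkindep$ is defined only on sets of ordinals, and duplication (and Proposition~\ref{duplicationproposition}) is stated only for $\vd_x$-sets \emph{of sets of ordinals}, whereas a bijection $g\colon\kappa\to A$ with $A\not\subset\vd_x$ is neither a set of ordinals nor covered by the paper's coding conventions (those apply only to subsets of $\vd_x$). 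You do flag that a coding is needed, but you aim it at the elements of $A$ rather than at the well-orderings. The paper's fix is the repackaging you want: duplicate instead the class $B=\{y\colon y$ is a set of ordinals and $\vd_y$ contains a well-ordering of $A\}$, which is definable from $x$ (as $A\in\vd_x$), consists of genuine sets of ordinals, and is nonempty precisely because of the definability axiom (every well-ordering of $A$ is definable from some set of ordinals). For $y\in B$ one still has $A\subset\vd_{xy}$, so your final two lines --- duplication, then Proposition~\ref{intersectionproposition} --- close the argument verbatim. Your (2)$\to$(1) direction, restricting the definable class well-ordering of $\vd_x$ to $A$, is exactly right and needs no independence axiom.
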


\begin{proof}
(2) implies (1) as $\vd_x$ is well-orderable. For (1) implies (2), let $B=\{y\colon y$ is a set of ordinals such that $\vd_y$ contains a well-ordering of $A\}$. Note that if $y\in B$, then $A\subset\vd_y$. By the extension property, I can find sets $y_0, y_1\in B$ such that $y_0\forkindep[x]y_1$. By Proposition~\ref{intersectionproposition}, $\vd_{xy_0}\cap\vd_{xy_1}=\vd_{x}$. As $A\subset\vd_{xy_0}$ and $A\subset\vd_{xy_1}$ both hold, $A\subset\vd_x$ follows.
\end{proof}

\begin{corollary}
	\label{idealcorollary}
	Let $x$ be a set of ordinals, and $Z, I, A\in\vd_x$ be a Polish space, a set of closed subsets of $Z$ closed under subset, and a subset of $Z$ respectively. The following are equivalent:
	
	\begin{enumerate}
		\item $A$ is a subset of a union of a well-orderable subset of $I$;
		\item $A\subseteq \bigcup (I\cap \vd_x)$.
	\end{enumerate}
\end{corollary}

\noindent In particular, every $F_\gs$ $\vd_x$-set is a union of all its closed subsets in $\vd_x$, as follows from the corollary applied to $I=$the collection of all its closed subsets.

\begin{proof}
	(2) implies (1) as $\vd_x$ is well-orderable. For (1) implies (2), let $B$ be the union of all closed sets in $\vd_x\cap I$, and work to show that $A\subseteq B$.
	
	If not, let $y_0$ be any set of ordinals such that there is a point $z\in\vd_{xy_0}$ in $A\setminus B$. Consider the nonempty class $D=\{y\colon$ in $\vd_{xy}$ there is a well-orderable collection of sets in $I$ whose union covers $A\}$ definable from $x$ and an additional finite tuple of ordinal parameters. By Proposition~\ref{extensionproposition}, there is a point $y_1\in D$ such that $y_0\forkindep[x]y_1$ holds. By Corollary~\ref{wocorollary}, $A\subset\bigcup I\cap\vd_{xy_1}$ holds. Let $C\in\vd_{xy_1}$ be a set in $I$ such that $z\in C$ holds. 
	
	Now, choose a countable basis $\mathcal{B}\in\vd_x$ for the space $Z$. Note that $\mathcal{B}\subset\vd_x$ by Corollary~\ref{wocorollary}. Let $E_0=\{O\in\mathcal{B}\colon z\in O\}$ and $E_1=\{O\in\mathcal{B}\colon O\cap C=0\}$. These are disjoint subsets of $\mathcal{B}$ in $\vd_{xy_0}$ and $\vd_{xy_1}$ respectively. By Proposition~\ref{separationproposition}, there is a $\vd_x$-set $F\subset\mathcal{B}$ such that $E_1\subseteq F$ and $E_0\cap F=0$. Let $C=X\setminus\bigcup F$. This is a closed $\vd_x$ subset of $X$, and by its definition it contains the point $z$ and it is a subset of $C$. The latter statement implies that $Z\setminus\bigcup F\in I$ holds, and the former statement yields a contradiction with the choice of the point $z$.
\end{proof}

\noindent The independence axiom has several interesting mathematical consequences not mentioning definability or $\forkindep$ already at this stage.

\begin{theorem}
	A union of a well-orderable collection of well-orderable sets is itself well-orderable.
\end{theorem}

\begin{proof}
	Let $A$ be such a collection, and let $x$ be a set of ordinals such that $A\in\vd_x$. Corollary~\ref{wocorollary} shows that $A\subset\vd_x$, and then in turn for each element $B\in A$, $B\subset\vd_x$. It follows that $\bigcup A\subset\vd_x$, therefore $A$ is well-orderable.
\end{proof}

\begin{theorem}
	Every linearly ordered set is a union of a well-orderable collection of bounded sets.
\end{theorem}

\begin{proof}
	Let $\langle L, \leq\rangle$ be a linear ordering. Let $x$ be a set of ordinals such that $\langle L, \leq\rangle\in\vd_x$. It will be enough to show that $L$ is the union of all bounded sets in $\vd_x$, since the class $\vd_x$ is canonically well-ordered in ZF. Suppose towards a contradiction that the set $C=L\setminus\bigcup\{A\in\vd_x\colon A\subseteq L$ is bounded$\}$ is nonempty. By the definability axiom, there is a $\vd$-function $F$ and a nonempty set $B\in \vd_x$ of sets of ordinals such that for every $y\in B$, $F(x, y)\in C$ holds.
	
	Use Corollary~\ref{tuplecorollary} to find a triple $\langle y_0, y_1, y_2\rangle$ independent over $x$ such that its entries all belong to the set $B$. Reindexing if necessary, I may assume that $F(x, y_0)\leq F(x, y_1)\leq F(x, y_2)$ holds.
	By the product property, there must be $\vd_x$ sets $B_0, B_1, B_2\subset B$ such that $y_0\in B_0$, $y_1\in B_1$, and $y_2\in B_2$ hold, and for all $z_0\in B_0, z_1\in B_1$, and $z_2\in B_2$, if $z_0\forkindep[x]z_1$ then $F(x, z_0)\leq F(x, z_1)$, and if $z_1\forkindep[x]z_2$ then $F(x, z_1)\leq F(x, z_2)$.
	
	Now, work to show that for every $y'_0\in B_0$, $F(x, y'_0)\leq F(x, y_2)$. To see this, use Proposition~\ref{extensionproposition} to find $y'_1\in B_1$ such that $y'_0y_2\forkindep[x]y'_1$, use the choice of the sets $B_0, B_1$ and $B_2$ to conclude that $F(x, y'_0)\leq F(x, y'_1)$ and $F(x, y'_1)\leq F(x, y_2)$,
	and use the transitivity of the relation $\leq$ to conclude that $F(x, y'_0)\leq F(x, y_2)$.
	
	Finally, by the previous paragraph, the set $\{F(x, y'_0)\colon y'_0\in B_0\}$ is in $\vd_x$ and bounded and contains $F(x, y_0)$, while $F(x, y_0)$ should belong to no such set, reaching a contradiction. 
\end{proof}

\section{The inaccessibility axiom}
\label{inaccessibilitysection}

In many arguments, one needs to assume that the $\hvd_x$ models are fairly thin so that they can be manipulated easily. This is the motivation for the following axiom.

\begin{definition}
\textnormal{(inaccessibility axiom)} There is no $\omega_1$-sequence of reals.
\end{definition}

\noindent To state the instrumental strengthening of the inaccessibility axiom, let $A$ be a countable set and $x$ be a set of ordinals such that $A\in\vd_x$. Then $\power(A)\cap\vd_x$ is countable. To see this, choose a bijection $\pi\colon A\to\gw$ and naturally extend it to a bijection between $\power(A)$ and $\power(\gw)$. Since $\power(A)\cap\vd_x$ is well-orderable by the canonical well-ordering of $\vd_x$, $\pi'' (\power(A)\cap\vd_x)$ is well-orderable as well, and by the inaccessibility axiom it must be countable.

All of the results of this section are proved in ZF+DC plus the definability, independence, and inaccessibility axioms. One attractive consequence is the verification of the result of  T{\" o}rnquist \cite{tornquist:mad} (see also \cite{mathias:happy}) in the Solovay model via a brief forcing-free argument. Recall that a \emph{MAD family} is an infinite subset of $\power(\gw)$ in which any two distinct elements have finite intersection, and it is maximal such with respect to inclusion.

\begin{theorem}
\label{tornquisttheorem}
There are no MAD families.
\end{theorem}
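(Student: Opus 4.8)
The plan is to assume toward a contradiction that $\mathcal{A}$ is a MAD family, to fix a set of ordinals $x$ with $\mathcal{A}\in\vd_x$, and to contradict the maximality of $\mathcal{A}$ itself.

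\emph{Reduction.} A countable almost disjoint family is never maximal: listing it as $\{a_n\colon n\in\gw\}$ and choosing a strictly increasing transversal $b=\{b_n\colon n\in\gw\}$ with $b_n\in a_n\setminus(a_0\cup\dots\cup a_{n-1})$ (possible since $a_n$ is infinite and meets each earlier $a_i$ in a finite set), one gets an infinite set almost disjoint from every $a_n$ and equal to none of them, so $\mathcal{A}\cup\{b\}$ is a strictly larger almost disjoint family. Hence $\mathcal{A}$ is uncountable; by Corollary~\ref{wocorollary} and the inaccessibility axiom it is not well-orderable, so $A:=\mathcal{A}\setminus\vd_x$ is a nonempty $\vd_x$-set. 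Let $\mathcal{I}$ be the ideal on $\gw$ generated by $\mathcal{A}$ and the finite sets; note $\gw\notin\mathcal{I}$ (a finite cover $\gw=^*a_0\cup\dots\cup a_k$ by members of $\mathcal{A}$ would, by almost disjointness, leave no room for a further member), and $B\setminus a\in\mathcal{I}^+$ whenever $B\in\mathcal{I}^+$ and $a\in\mathcal{A}$.

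\emph{Diagonalizers.} For a set of ordinals $y$ the family $\mathcal{A}\cap\vd_{xy}$ is countable by the instrumental form of the inaccessibility axiom. Enumerate it in the canonical $\vd_{xy}$-wellorder as $\langle a_n\colon n\in\gw\rangle$ and recursively define finite $s_n$ and $B_n\in\mathcal{I}^+$ by $s_0=0$, $B_0=\gw$, $s_{n+1}=s_n\cup\{\min(B_n\setminus(\max s_n+1))\}$, $B_{n+1}=(B_n\setminus a_n)\setminus(\max s_{n+1}+1)$, and put $m_y=\bigcup_n s_n$. Each $B_n$ is $\mathcal{I}^+$-positive hence infinite, so $m_y$ is infinite; $m_y\cap a_n\subseteq s_{n+1}$, so $m_y$ is almost disjoint from $\mathcal{A}\cap\vd_{xy}$; and $m_y\in\vd_{xy}$. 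Moreover $m_y\in\mathcal{I}^+$: the set of members of $\mathcal{A}$ meeting $m_y$ infinitely is in canonical bijection (via $a\mapsto a\cap m_y$) with a maximal almost disjoint family on $m_y$, which is infinite because no finite almost disjoint family is maximal, and $m_y\in\mathcal{I}$ would make that set finite. In particular $m_y\notin\mathcal{A}$, and by maximality of $\mathcal{A}$ there is $a\in\mathcal{A}$ with $|m_y\cap a|=\gw$; any such $a$ lies in $\mathcal{A}\setminus\vd_{xy}\subseteq A$.

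\emph{The contradiction.} Using the duplication property, pick $a_0,a_1\in A$ with $a_0\forkindep[x]a_1$; since $A$ has no $\vd_x$-member, Proposition~\ref{intersectionproposition} forces $a_0\neq a_1$. I claim $m_{a_0}$ is almost disjoint from every member of $\mathcal{A}$, which contradicts maximality (recall $m_{a_0}\notin\mathcal{A}$). It is almost disjoint from $\mathcal{A}\cap\vd_{xa_0}$ by construction, so it suffices to rule out a counterexample $v\in\mathcal{A}\setminus\vd_{xa_0}$ (necessarily $v\in A$) with $|m_{a_0}\cap v|=\gw$. The intended mechanism: first use the duplication property to replace $a_0$ by a parameter absorbing the members of $\mathcal{A}$ that fail to be independent from it over $x$, so that the counterexample may be taken with $a_0\forkindep[x]v$; then apply the product property to the $\vd_x$-relation $R=\{\langle u,w\rangle\colon |m_u\cap w|=\gw\}$ and the pair $\langle a_0,v\rangle\in R$ to get $\vd_x$-sets $B_0\ni a_0$ and $B_1\ni v$, intersected with $A$, with all $x$-independent pairs of $B_0\times B_1$ landing in $R$; finally, using Proposition~\ref{duplicationproposition} and heredity, locate $w\in B_1$ that is independent from $a_0$ over $x$ and lies in $\vd_{xa_0}$. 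Then $w\in\mathcal{A}\cap\vd_{xa_0}$, so $m_{a_0}$ is almost disjoint from $w$, i.e.\ $\langle a_0,w\rangle\notin R$ — the contradiction.

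\emph{The main obstacle.} The crux — and the step that must be carried out with care — is this last synthesis: reconciling the fact that $m_{a_0}$ avoids the merely countably many members of $\mathcal{A}$ lying in $\vd_{xa_0}$ with the fact that $\mathcal{A}$, being maximal, is far too large to live inside any such class. The product property is precisely the device that transports the property ``almost disjoint from $m_{a_0}$'' from $\mathcal{A}\cap\vd_{xa_0}$ to an arbitrary member of $\mathcal{A}$ that is independent from $a_0$, but pinning the maximality witness down inside $\vd_{xa_0}$ and handling the members of $\mathcal{A}$ that are \emph{not} independent from $a_0$ (so that passing to a larger parameter really does exhaust them) is where the duplication, heredity, and product axioms have to be combined delicately; I expect this bookkeeping to be the only nontrivial part of the proof.
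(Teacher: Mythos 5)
The reduction and the construction of the diagonalizers $m_y$ are essentially sound (one quibble: the justification of $m_y\in\mathcal{I}^+$ via ``no finite almost disjoint family is maximal'' is false as stated, since a finite partition of an infinite set into infinite pieces is a maximal almost disjoint family; but nothing downstream needs $\mathcal{I}$-positivity, only that $m_y$ is infinite and not a member of $\mathcal{A}$). The genuine gap is in the endgame, and it is not the bookkeeping problem you describe: the step you flag as delicate is impossible. You want $w\in B_1$ with $w\forkindep[x]a_0$ and $w\in\vd_{xa_0}$. By Proposition~\ref{intersectionproposition}, any such $w$ satisfies $w\in\vd_{xw}\cap\vd_{xa_0}=\vd_x$; but $B_1$ was intersected with $A=\mathcal{A}\setminus\vd_x$, so it contains no $\vd_x$-elements, and the product property gives you no way to force a $\vd_x$-member of $\mathcal{A}$ into $B_1$ (you are always free to shrink $B_1$ to $B_1\cap A$ while preserving its conclusion). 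More structurally, the product property transports ``$m_u\cap w$ is infinite'' to all $x$-independent pairs in $B_0\times B_1$, and every way of refuting that conclusion requires a pair which is simultaneously independent over $x$ and has $w\in\vd_{xu}$ --- exactly what the axioms forbid outside $\vd_x$. Taking several independent $w_0,w_1\in B_1$ with $u\forkindep[x]w_0w_1$ only tells you that $m_u$ meets two almost disjoint sets infinitely, which is no contradiction. Indeed your central claim, that $m_{a_0}$ is almost disjoint from \emph{every} member of $\mathcal{A}$, is (given MADness) outright equivalent to the desired contradiction, and maximality guarantees that $m_{a_0}$ really does meet some member of $\mathcal{A}\setminus\vd_{xa_0}$ infinitely; so the claim cannot be reached by transporting the behaviour of $m_{a_0}$ on $\mathcal{A}\cap\vd_{xa_0}$ alone.

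What is missing is a device that makes the diagonalizer \emph{decide} arbitrary $\vd_x$-subsets of $\gw$, not merely avoid the countably many members of $\mathcal{A}$ in $\vd_x$. The paper supplies exactly this: since $\hvd_x$ is a ZFC model with only countably many reals (inaccessibility), one fixes there a nonprincipal ultrafilter $U$ disjoint from the trace of the ideal generated by $\mathcal{A}$, and diagonalizes $U$ instead. A $U$-diagonalizer $b$ is, up to finitely many exceptions, contained in $d$ or in $\gw\setminus d$ for every $d\in\vd_x$ according to whether $d\in U$. Duplicating pairs $\langle b_0,c_0\rangle\forkindep[x]\langle b_1,c_1\rangle$ with $c_i\in\mathcal{A}$ meeting $b_i$ infinitely, one first uses $U$ to rule out $c_0=c_1$, then separates $c_0$ from $c_1$ by some $d\in\vd_x$ via Proposition~\ref{separationproposition}, and the two possible $U$-decisions of $d$ each contradict the infiniteness of $b_1\cap c_1$ or of $b_0\cap c_0$. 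You would need to import that ultrafilter (or an equivalent decisiveness mechanism) for your argument to close.
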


\begin{proof}
Towards a contradiction, assume that $A\subset\power(\gw)$ is a MAD family. Let $x$ be a set of ordinals such that $A\in\vd_x$ holds. Let $I$ be the ideal on $\gw$ generated by $A$. Since $I\cap\hvd_x$ belongs to $\vd_x$, it is an ideal there, and $\hvd_x$ is a model of the axiom of choice, there is in $\hvd_x$ a nonprincipal ultrafilter $U$ on $\gw$ disjoint from $I\cap\hvd_x$. Let $B=\{\langle b, c\rangle\colon b$ diagonalizes $U$ and $c\in A$ is a set such that $b\cap c$ is infinite$\}$. The set $B$ is clearly in $\vd_x$. It is also nonempty: since there is no $\gw_1$-sequence of reals and the model $\hvd_x$ contains a well-ordering of its reals, it must contain only countably many reals, the set $U$ is countable, there are sets diagonalizing it, and each such set has infinite intersection with some element of $A$ by the maximality of $A$. Use the extension axiom to find pairs $b_0c_0\forkindep[x]b_1c_1$ in the set $B$.

Now, argue that $c_0\neq c_1$. If these two sets were equal, by the independence assumption and Proposition~\ref{intersectionproposition} they would belong to $\hvd_x$, therefore to $I\cap\hvd_x$, their common complement would belong to $U$, and the set $b_0$ would be a subset of the complement up to finitely many exceptions. This contradicts the assumption that $b_0\cap c_0$ is infinite.

Since $c_0$ and $c_1$ are distinct elements of $A$, they have finite intersection. By Proposition~\ref{separationproposition}, there is a set $d\subset\gw$ in $\hvd_x$ such that $c_0\subseteq d$ and $c_1\cap d=0$ up to finitely many exceptions. Consider the question whether $d$ belongs to $U$ or not. If the answer is affirmative, then $b_1\subseteq d$ up to finitely many exceptions, contradicting the assumption that $b_1\cap c_1$ is infinite. If the answer is negative, then $b_0\cap d$ is finite, contradicting the assumption that $b_0\cap c_0$ is infinite. Contradiction.
\end{proof}

\begin{theorem}
Let $Z$ be a Polish space and $I$ be a $\gs$-ideal $\gs$-generated by closed sets. Then $I$ is closed under well-ordered unions.
\end{theorem} 

\begin{proof}
Let $A$ be a well-orderable subset of $I$. Let $x$ be a set of ordinals such that $Z$, $I$, and $A$ all belong to $\vd_x$,  Note that $A\subset\vd_x$ holds by Corollary~\ref{wocorollary}. By Corollary~\ref{idealcorollary}, each set in $A$ is a subset of the union of all closed sets in $I$ which belong to $\vd_x$; it follows that $\bigcup A$ is a subset of this union. 

To complete the argument, it will be enough to show that there are only countably many closed subsets of $Z$ in $\vd_x$; this is where the inaccessibility axiom enters the argument. Let $\mathcal{B}$ be a countable basis of $Z$ in $\vd_x$; $\mathcal{B}\subset\vd_x$ follows from Corollary~\ref{wocorollary}.
Each closed set $C$ can be identified with the set of all sets in $\mathcal{B}$ disjoint from $C$, and there are only countably many subsets of $\mathcal{B}$ in $\vd_x$ as $\vd_x$ is well-orderable and $\mathcal{B}$ is countable.
\end{proof}

\noindent The following important consequence was proved by Stern in the Solovay model \cite{stern:borel}. Our axiomatic system allows an entirely transparent proof with no forcing and minimal amount of descriptive set theory.

\begin{theorem}
\label{sterntheorem}
For a countable ordinal $\ga$, there is no injective $\gw_1$-sequence of $\mathbf{\Pi}^0_\ga$-subsets of a given Polish space.
\end{theorem}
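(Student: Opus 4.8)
The plan is to argue by contradiction in the spirit of the preceding ideal-theoretic corollaries, exploiting the fact that a $\mathbf{\Pi}^0_\ga$ set can be coded by a countable object and that the space of such codes (equivalently, a suitable $\gs$-ideal generated by closed sets) has only countably many members in each $\vd_x$. Suppose $\langle P_\xi\colon\xi<\gw_1\rangle$ is an injective sequence of $\mathbf{\Pi}^0_\ga$ subsets of a Polish space $Z$ with countable basis $\mathcal{B}$, and let $x$ be a set of ordinals with $Z,\mathcal{B}$ and this sequence in $\vd_x$. The key observation is that each $P_\xi$ is determined by a Borel code $c_\xi\in\baire$, and by the inaccessibility axiom applied inside $\hvd_{x}$ (using that $\hvd_x$ is a model of ZFC with $\gw_1$ inaccessible, exactly as in the MAD-family and ideal corollaries) there are only countably many reals in $\vd_x$, hence only countably many $\mathbf{\Pi}^0_\ga$ subsets of $Z$ that are coded in $\vd_x$. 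That already gives a contradiction if one can show every $P_\xi$ lies in $\vd_x$ — but that is exactly what must be established, since a priori the enumeration is in $\vd_x$ only as a sequence, and the individual terms need not be.

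First I would set up the reduction: fix $\xi<\gw_1$ and pick a set of ordinals $y$ such that (a code for) $\xi$, hence the code $c_\xi$ for $P_\xi$, lies in $\vd_{xy}$; using Proposition~\ref{duplicationproposition} find $y'$ with a code for $P_\xi$ in $\vd_{xy'}$ and $y\forkindep[x]y'$. Then $P_\xi$ — or more precisely its Borel code, or even more robustly the set $\{O\in\mathcal{B}\colon O\subseteq P_\xi\}$ together with $\{O\in\mathcal{B}\colon O\cap P_\xi=0\}$ type data, whatever suffices to reconstruct a $\mathbf{\Pi}^0_\ga$ set — is an element of $\vd_{xy}\cap\vd_{xy'}$, which equals $\vd_x$ by Proposition~\ref{intersectionproposition}. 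The care needed here is that ``being $\mathbf{\Pi}^0_\ga$'' is absolute and that a $\mathbf{\Pi}^0_\ga$ set is recovered from a countable amount of information living in $\vd_x$; since $\vd_x$ is well-orderable and $\mathcal{B}$ is countable, Corollary~\ref{wocorollary} gives $\mathcal{B}\subset\vd_x$, and then the countably-branching transfinite construction of a $\mathbf{\Pi}^0_\ga$ set from basic open sets goes through inside $\vd_x$. Once $P_\xi\in\vd_x$ for every $\xi$, injectivity of the sequence embeds $\gw_1$ into the set of $\vd_x$-coded $\mathbf{\Pi}^0_\ga$ subsets of $Z$; but by inaccessibility that set is countable, a contradiction.

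An alternative, perhaps cleaner packaging avoids talking about codes: for each $y\in\baire$ let $I_y$ be the $\gs$-ideal on $Z$ $\gs$-generated by the closed sets in $\vd_{xy}$, and observe via Proposition~\ref{idealproposition} and the inaccessibility-driven fact (used in the corollaries above) that each such ideal has an $F_\gs$ ``top'' definable in $\vd_{xy}$; combined with the independence argument this pins down any hereditarily-$\vd_x$-approximable Borel set of low complexity into $\vd_x$ itself. I expect the main obstacle to be precisely the bookkeeping in the previous paragraph: verifying that the amount of data needed to reconstruct a $\mathbf{\Pi}^0_\ga$ set is genuinely a single well-orderable set (so that the two-model-intersection trick of Proposition~\ref{intersectionproposition} applies), and that one can iterate the separation step through $\ga$ levels of the Borel hierarchy uniformly — the induction on $\ga$ parallels the role $\mathcal{B}$ plays at level $1$ via Corollary~\ref{wocorollary}, but one must be sure no uncountable choices sneak in at limit stages. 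Modulo that, the argument is short and entirely forcing-free.
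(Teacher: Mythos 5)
Your overall counting strategy is the right one, and your diagnosis of where the difficulty lies is accurate; but the mechanism you propose for resolving it does not work, and the paper's proof is essentially entirely devoted to supplying a mechanism that does. The gap is in the step where you ``pin down'' a code for $P_\xi$ into $\vd_x$ via Proposition~\ref{intersectionproposition}. First, note that the set $P_\xi$ itself is trivially in $\vd_x$ (it is definable from the sequence and the ordinal $\xi$), so the intersection trick applied to $P_\xi$ as a set yields nothing new. What you actually need is a \emph{code} for $P_\xi$ --- a well-orderable object from which $P_\xi$ can be reconstructed and which can be counted inside $\hvd_x$ --- and here the trick fails: if $f\in\vd_{xy}$ and $f'\in\vd_{xy'}$ are codes for the same $\mathbf{\Pi}^0_\ga$ set with $y\forkindep[x]y'$, there is no reason for $f$ and $f'$ to coincide, so neither need lie in $\vd_{xy}\cap\vd_{xy'}$, and Proposition~\ref{intersectionproposition} gives nothing. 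Your fallback of ``canonical data'' such as $\{O\in\mathcal{B}\colon O\cap P_\xi=0\}$ works only at level $\ga=1$, where a closed set is determined by code-independent basis information; at higher levels the decomposition of a $\mathbf{\Pi}^0_\ga$ set into lower-level pieces is not canonical, and you correctly flag this as ``the main obstacle'' but offer no way past it.

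The paper overcomes exactly this obstacle with two nontrivial devices. First, given a $\mathbf{\Pi}^0_\ga$ set $B\in\vd_x$ and the (possibly $\vd_x$-element-free) set $C$ of codes $f$ with $\eval(U_\ga,f)=B$, it builds a well-founded tree $T\in\vd_x$ whose nodes carry nonempty \emph{$\vd_x$-sets of codes} rather than individual codes, together with a function $h$ on terminal nodes, and proves $B=\eval(T,h)$ by a $T$-induction invoking the duplication and product properties at every node; this manufactures a single well-ordered code in $\vd_x$ out of a set of codes none of which is in $\vd_x$. Second, a separate induction on $\ga$ collapses any such well-ordered code to an equivalent one in $\hvd_x$ of cardinality at most $\beth_{\ga+1}$, so that inaccessibility of $\gw_1$ in $\hvd_x$ yields the countable bound on $\mathbf{\Pi}^0_\ga$ sets in $\vd_x$. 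Without some version of these two steps your argument does not close; as written it assumes the conclusion of the hard part.
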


\begin{proof}
It is certainly enough to deal with the Baire space $\baire$ only. To start, for an ordinal $\ga$ a \emph{$\ga$-code} is a pair $\langle T, f\rangle$ where $T$ is a well-orderable, well-founded tree of rank $\ga$ and $f$ is a function from terminal nodes of $T$ to $\gwtree$. The function $\eval(T, f, \cdot)\colon T\to\power(\baire)$ is defined by $T$-recursion as $\eval(T, f, t)=\{z\in\baire\colon f(t)\subset z\}$ for terminal node $t\in T$, and $\eval(T, f, t)=\bigcap\{\baire\setminus \eval(T, f, s)\colon s$ is an immediate successor of $t\}$ if $t$ is not terminal. In the end, let $\eval(T, f)=\eval(T, f, t)$ for the largest node $t\in T$.

The proof relies on two central claims which do not use the inaccessibility axiom. Let $x$ be any set of ordinals.

\begin{claim}
Let $\ga$ be any ordinal. Let $B\subset\baire$ be a $\vd_x$-set which has an $\ga$-code. Then $B$ has an $\ga$-code in $\vd_x$.
\end{claim}

\begin{proof}
Note that every well-orderable well-founded tree is isomorphic to some tree of descending sequences of ordinals. Thus, there must be a cardinal $\kappa$ such that $B$ has a $\ga$-code whose tree is a set of decreasing sequences of ordinals smaller than $\kappa$. Let $C$ be the nonempty $\vd_x$-set of all such $\ga$-codes for $B$.

Let $T$ be the set of all pairs $\langle t, D\rangle$ such that $t$ is a decreasing sequence of ordinals smaller than $\kappa$, $D\subseteq C$ is a nonempty $\vd_x$-set, there is an ordinal $\gb=\gb(t, D)\leq\ga$ such that for all codes $\langle U, h\rangle\in D$ it is the case that $t\in U$ and $t$ has rank $\gb$ in $U$, if $\gb=0$ then there is a sequence $f(t, D)\in\gwtree$ such that for all $\langle U, h\rangle\in D$ $h(t)=f(t, D)$ holds, and if $t=0$ then $D=C$. The ordering on $T$ is defined by $\langle t_1, D_1\rangle\leq \langle t_0, D_0\rangle$ if either the two pairs are equal or else $t_0$ is a proper initial segment of $t_1$ and $D_1\subseteq D_0$ holds.

It is not difficult to see that $T$ is a tree, it belongs to $\vd_x$, it is a subset of $\vd_x$ so well-orderable, and it is a well-founded tree of rank $\ga$ as the function $\langle t, D\rangle\mapsto\gb(t, D)$ shows. It will be enough to show that $B=\eval(T, f)$.

To this end, define the function $\eval'\colon T\to\power(\baire)$ by $\eval'(t, D)=\{z\in\baire\colon\forall \langle U, h\rangle\in D\ \langle U, h\rangle\forkindep[x] z\to z\in\eval(U, h, t)\}$.
I will first show that $\eval'(t, D)=\eval(T, f, t, D)$. This is proved by $T$-induction on $\langle t, D\rangle$. The case of nodes $\langle t, D\rangle$ of rank zero follows immediately from the extension property of $\forkindep$. For a node $\langle t, D\rangle$ of nonzero rank, I only need to show that $\eval'(t, D)=\bigcap\{\baire\setminus\eval'(s, E)\colon s$ is a one step extension of $s$ and $E\subseteq D\}$ and then apply the induction hypothesis.

The two inclusions in this equality are both proved in the contrapositive. Suppose that $z\in\baire$ does not belong to the right-hand side, as witnessed by some $\langle s, E\rangle\in T$; so, $z\in\eval'(s, E)$ holds. Use Proposition~\ref{extensionproposition} to find some code $\langle U, h\rangle\in E$ such that $\langle U, h\rangle\forkindep[x]z$. By the definition of $\eval'$, it must be the case that $z\in \eval(U, h, s)$, so $z\notin \eval(U, h, t)$, and the code $\langle U, h\rangle$ witnesses that $z$ does not belong to the left-hand side.

Suppose on the other hand that $z\in\baire$ does not belong to the left-hand side, as witnessed by some code $\langle U, h\rangle\in D$. So, $\langle U, h\rangle\forkindep[x]z$ holds and $z\in \eval(U, h, t)$ fails. Find an immediate successor $s\in U$ of $t$ such that $z\in\eval(U, h, s)$ holds, and write $\gb$ for the rank of $s$ in $U$. By the product property of $\forkindep$, there is a $\vd_x$-set $E\subseteq D$ containing $\langle U, h\rangle$ such that for every $\langle U', h'\rangle\in E$, $s\in U'$ has rank $\gb$, and if $\langle U', h'\rangle\forkindep[x]z$ then $z\in\eval(U', h', s)$ holds. Then the node $\langle s, E\rangle\in T$ witnesses that $z$ does not belong to the right-hand side.

Finally, argue that $\eval(T, h, 0, C)=\eval'(0, C)=B$. For this, suppose that $z\in\baire$ is arbitrary. If $z\in B$, then for every code $\langle U, h\rangle\in C$ whatsoever, $z\in \eval(U, h, 0)$ holds, so $z\in\eval'(0, C)$ follows from the definitions. If, on the other hand, $z\in\eval'(0, C)$, then use Proposition~\ref{extensionproposition} to produce a code $\langle U, h\rangle\in C$ such that $\langle U, h\rangle\forkindep[x]z$. By the definitions, it must be the case that $z\in \eval(U, h, 0)$ holds. Since $\eval(U, h, 0)=B$, $z\in B$ follows.
\end{proof}

\begin{claim}
Let $\ga$ an ordinal, and $\langle T, f\rangle$ a $\ga$-code in $\vd_x$. Then there is a well-ordered $\ga$-code $\langle S, g\rangle\in\hvd_x$ such that $\eval(T, f)=\eval(S, g)$ and $\hvd_x\models |S|\leq\beth_{\ga+1}$.
\end{claim}

\begin{proof}
This is proved by induction on $\ga$. The case of $\ga=1$ is trivial. Now suppose $\ga$ is fixed and the claim has been proved for all smaller ordinals. Let $\langle T, f\rangle$ be a well-orderable $\ga$-code in $\vd_x$. Note that $T\subset\vd_x$ holds by Corollary~\ref{wocorollary}. Let $t\in T$ be the largest element. By the induction hypothesis, for each immediate successor $s$ of $t$ there is a code $\langle S, g\rangle$ such that $\eval(S, g)=\eval(T\restriction s, f)$ of cardinality at most $\beth_\ga$. Let $\langle S_s, g_s\rangle$ be the least such a code in the canonical well-ordering of $\vd_x$. Note that the correspondence $s\mapsto S_s, g_s$ is in $\vd_x$. Now, in $\hvd_x$ there are only $\beth_{\ga+1}$-many options for the codes $\langle S_s, g_s\rangle$ up to isomorphism. Amalgamate all of them into a single tree, appending one node on the top, and the resulting code confirms the induction hypothesis at $\ga$.
\end{proof}

\noindent To prove the theorem, suppose towards a contradiction that $\ga\in\gw_1$ is an ordinal and $\langle B_\gb\colon\gb\in\gw_1\rangle$ is an injective sequence of $\mathbf{\Pi}^0_\ga$ sets. Let $x$ be a set of ordinals such that $\vd_x$ contains the sequence. It is easy to see in ZF+DC that every $\mathbf{\Pi}^0_\ga$ set has a countable $\ga$-code. The conjunction of the two claims then shows that there is an injection of the collection of $\mathbf{\Pi}^0_\ga$ sets in $\vd_x$ and $\ga$-codes in $\hvd_x$ of cardinality $\beth_{\ga+1}$. Now, there are uncountably many $\mathbf{\Pi}^0_\ga$ sets in $\vd_x$ by the contradictory assumption, and there are only countably many $\ga$-codes in $\hvd_x$ of cardinality $\beth_{\ga+1}$ by the inaccessibility axiom.  This is a contradiction.
\end{proof}

\section{The Baire axiom}
\label{bairesection}

The inaccessibility axiom still does not provide sufficient control over the models $\hvd_x$. The following familiar statement provides plenty of additional requisite strength.

\begin{definition}
The \emph{Baire axiom} asserts that every subset of $\baire$ has the property of Baire.
\end{definition}

\noindent It turns out that in the presence of an independence relation, this axiom is much stronger than may appear on the first sight. In particular, it implies that all sets of reals are Lebesgue measurable and all subsets of $[\gw]^{\aleph_0}$ have the Ramsey property. All results in this section use the axiomatic basis of ZF+DC plus definability, independence, inaccessibility, and the Baire axiom. They are easiest to present using the following variation of the forcing relation

\begin{definition}
Let $P$ be a countable poset, and let $x$ be a set of ordinals such that $P\in\vd_x$. 

\begin{enumerate}
	\item A filter $g\subset P$ is \emph{generic over} $\vd_x$ if it has nonempty intersection with every dense subset of $P$ which belongs to $\vd_x$;
	\item if $n\in\gw$ and $\bar\tau\in\vd_x$ is an $n$-tuple of $P$-names and $p\in P$ is a condition and $\phi$ is a formula in the language of set theory with $n$ many free variables, write $p\Vvdash_x\phi(\bar\tau)$ if for every filter $g\subset P$ generic over $\vd_x$ containing $p$, $\phi(\bar\tau/g)$ holds.
\end{enumerate}
\end{definition}

\noindent Several remarks are in order. First of all, Corollary~\ref{wocorollary} shows that as $P$ is well-orderable, $P\subset\vd_x$ holds. By the inaccessibility axiom then, $P$ has only countably many subsets in $\vd_x$, and many filters generic over $\vd_x$ exist. The valuation $\bar\tau/g$ is defined for any filter on $P$ by the usual transfinite recursion formula. The relation $\Vvdash\phi$ belongs to $\vd_x$. Note also the difference between $\Vvdash$ and $\Vdash$: $\Vdash$ speaks about satisfaction in the generic extension of $\hvd_x$, while $\Vvdash$ speaks about satisfaction in the surrounding universe.

\begin{proposition}
	\label{nodependproposition}
The definition of $\Vvdash$ does not depend on $x$.
\end{proposition}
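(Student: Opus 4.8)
The plan is to show that if $x, x'$ are two sets of ordinals, $P \in \vd_x \cap \vd_{x'}$ is a well-orderable poset with the same canonical definable well-order relative to each (this is automatic, since $P$ is well-orderable and its elements are sets of ordinals, so $P \subseteq \vd_\emptyset$ once we code things correctly — more carefully, one first reduces to the case where $P, \bar\tau, \phi$ are all coded by a single set of ordinals $w$ that lies in both $\vd_x$ and $\vd_{x'}$, using Corollary~\ref{wocorollary} and the fact that $P$-names in $\vd_x$ that are hereditarily of size $\leq |P|$ suffice), then for every condition $p \in P$, $p \Vvdash_x \phi(\bar\tau)$ if and only if $p \Vvdash_{x'} \phi(\bar\tau)$. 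Since genericity over $\vd_x$ only refers to the dense subsets of $P$ lying in $\vd_x$, and $\vd_x \cap \vd_{x'} = \vd_w$-ish by Proposition~\ref{intersectionproposition} once we arrange $x \forkindep[\emptyset] x'$, the natural strategy is: \emph{first} handle the case of two $\forkindep$-independent parameters, \emph{then} bootstrap to arbitrary pairs by inserting an independent third parameter.

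\textbf{Step 1: independent parameters.} Suppose $x_0 \forkindep[\emptyset] x_1$ and $P, \bar\tau, \phi, p$ are all in $\vd_{x_0} \cap \vd_{x_1}$; by Proposition~\ref{intersectionproposition} they are in $\vd_\emptyset$, hence actually definable from ordinal parameters alone. The key point is that a filter $g$ generic over $\vd_{x_0}$ is a fortiori generic over $\vd_\emptyset$, and conversely I claim: if $p \Vvdash_\emptyset \phi(\bar\tau)$ then $p \Vvdash_{x_0} \phi(\bar\tau)$, i.e. genericity over the larger model $\vd_{x_0}$ is not more restrictive than it appears. This is false in general — there are more dense sets in $\vd_{x_0}$ — so the real content must come from the other direction, and in fact the right formulation is symmetric in $x_0, x_1$: I would show $p \Vvdash_{x_0}\phi(\bar\tau) \iff p \Vvdash_{x_1}\phi(\bar\tau)$ directly. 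Fix $g$ generic over $\vd_{x_0}$ containing $p$; I want a $g'$ generic over $\vd_{x_1}$, containing $p$, giving the same truth value to $\phi(\bar\tau)$ — but that requires knowing $\phi(\bar\tau/g)$ is already decided by the $\vd_\emptyset$-part. So instead: assume $p \not\Vvdash_{x_0}\neg\phi$ and $p \not\Vvdash_{x_1}\phi$ towards a contradiction; then there is $g_0 \ni p$ generic over $\vd_{x_0}$ with $\phi(\bar\tau/g_0)$, and the set of conditions $q \leq p$ with $q \Vvdash_{x_1}\neg\phi$ is dense below $p$ in $\vd_{x_1}$ — wait, it is in $\vd_{x_1}$ but need not be dense. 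The clean way is to use the \emph{forcing theorem for $\Vvdash$} adapted to these models: one shows by induction on $\phi$ that $\{q : q \Vvdash_{x_0}\phi(\bar\tau)\} \cup \{q : q\Vvdash_{x_0}\neg\phi(\bar\tau)\}$ is dense (this uses only that $\vd_{x_0}$ is closed under the relevant definitions and that generics exist comeagerly — here the Baire axiom or just the definability of $\Vvdash$ suffices, since $\Vvdash_{x_0}$ is itself a $\vd_{x_0}$-definable relation and one can quote the usual forcing-theorem argument inside the bookkeeping). Granting that, $\{q : q\Vvdash_{x_1}\neg\phi(\bar\tau)\} \in \vd_{x_1}$ is dense below $p$, so $g_0$ meets it at some $q \leq p$; but $q \in g_0$ and $g_0$ is generic over $\vd_{x_0} \supseteq \vd_\emptyset \ni$ the relation $\Vvdash_{x_1}$(since that relation is $\vd_\emptyset$-definable once $P,\bar\tau,\phi \in \vd_\emptyset$ — here is where $\Vvdash_{x_1} = \Vvdash_{x_0}$ starts to close up), contradicting $\phi(\bar\tau/g_0)$.

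\textbf{Step 2: arbitrary parameters, and the obstacle.} For arbitrary $x, x'$ with $P, \bar\tau, \phi \in \vd_x \cap \vd_{x'}$, pick (using the duplication property, Proposition~\ref{duplicationproposition}, applied to a suitable $\vd_\emptyset$-definable class of "copies") a set of ordinals $z$ coding $P, \bar\tau, \phi$ such that $z \forkindep[\emptyset] x$ and $z \forkindep[\emptyset] x'$; then Step 1 gives $\Vvdash_x \;=\; \Vvdash_z \;=\; \Vvdash_{x'}$ on these data. \textbf{The main obstacle} I anticipate is Step 1's induction clause for the quantifier/negation cases, specifically making rigorous that "$q \Vvdash_{x_0}\phi(\bar\tau)$ or $q' \Vvdash_{x_0}\neg\phi(\bar\tau)$ for some $q' \leq q$" — i.e. the density of the decision set — \emph{without} already having the independence of $x_0$ from the generic $g$ baked in, and without circularity between "$\Vvdash_{x_0}$ is definable from $x_0$" and "$\Vvdash_{x_0} = \Vvdash_{x_1}$". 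I expect this is resolved exactly as in the classical proof of the forcing theorem (which never needs choice beyond DC over the ground model $\hvd_{x_0}$, a model of ZFC), combined with the observation that $\Vvdash_{x_0}\phi(\bar\tau)$ differs from the internal $\Vdash^{\hvd_{x_0}}$ only for unbounded/second-order $\phi$, and the statements we care about reduce to arithmetic statements about $\bar\tau/g$, for which internal and external forcing coincide. A secondary subtlety is checking that the class of "copies" of $(P,\bar\tau,\phi)$ used in Step 2 is genuinely $\vd_\emptyset$-definable and nonempty, which follows since $(P,\bar\tau,\phi)$ is hereditarily well-orderable hence coded by a set of ordinals, and any such code can be conjugated by an ordinal bijection.
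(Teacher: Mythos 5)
There is a genuine gap here, in fact two. First, your Step 1 ultimately rests on a ``forcing theorem for $\Vvdash$'' asserting that $\{q\colon q\Vvdash\phi(\bar\tau)\}\cup\{q\colon q\Vvdash\lnot\phi(\bar\tau)\}$ is dense. That statement is not the classical forcing theorem and cannot simply be quoted: $\Vvdash$ refers to satisfaction of $\phi$ in the surrounding universe, not in the generic extension of $\hvd_{x_0}$, so there is no syntactic recursion on $\phi$ to run inside $\hvd_{x_0}$. In the paper this density is exactly Proposition~\ref{baireproposition}(2); it is proved only for \emph{countable} $P$, it uses the Baire property axiom applied to the Polish space of filters, and it is stated after the present proposition and relies on it to drop the subscript from $\Vvdash$. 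So your route is circular and, even granting it, would not cover uncountable well-orderable posets, which the proposition must. Second, your Step 2 reduction fails: if $z$ codes $P,\bar\tau,\phi$ and $z\forkindep[\emptyset]x$, then Proposition~\ref{intersectionproposition} forces $P,\bar\tau,\phi\in\vd_z\cap\vd_x=\vd_\emptyset$, which is false in general; conjugating by an ordinal bijection produces an isomorphic copy of the data rather than the same $\bar\tau$, and you do not address transferring the truth value of $\Vvdash$ along such an isomorphism (which, besides, need not lie in $\vd_\emptyset$).

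The idea you are missing is that independence already gives genericity transfer with no forcing theorem at all. The paper argues: if $p\Vvdash_{x_0}\phi(\bar\tau)$ fails, the class $B_0$ of filters $g\ni p$ generic over $\vd_{x_0}$ with $\lnot\phi(\bar\tau/g)$ is nonempty and definable from $x_0$, as is the class $B_1$ of parameters $y$ with $P,\bar\tau\in\vd_y$ and $p\Vvdash_y\phi(\bar\tau)$. Proposition~\ref{duplicationproposition} yields $g\in B_0$ and $y\in B_1$ with $g\forkindep[x_0]y$. Then $g$ is automatically generic over $\vd_y$: a dense $C\subset P$ in $\vd_y$ with $C\cap g=0$ would, by Proposition~\ref{separationproposition}, be covered by some $D\in\vd_{x_0}$ disjoint from $g$, and $D$ is dense since it contains $C$, contradicting the genericity of $g$ over $\vd_{x_0}$. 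Hence $\phi(\bar\tau/g)$ holds, contradicting $g\in B_0$. This works for any well-orderable $P$ and needs neither the Baire axiom nor any density-of-decision lemma.
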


\noindent That is to say that for all sets $x$ of ordinals such that $P, \bar\tau\in\vd_x$, the truth value of $p\Vvdash_x\phi(\bar\tau)$ is the same. 

\begin{proof}
Suppose towards a contradiction that there is $x$ such that $p\Vvdash_{x}\phi(\bar\tau)$ fails, and for some $y$ $p\Vvdash_{y}\phi(\bar\tau)$ holds. Consider the following two nonempty classes definable from $x$ and a finite tuple of ordinal parameters: $B_0=\{g\subset P\colon g$ is generic over $\vd_{x}$, $p\in g$ holds, and $\phi(\bar\tau/g)$ fails$\}$ and $B_1=\{y\colon P,\bar\tau\in\vd_y$ and $p\Vvdash_y\phi(\bar\tau)$ holds$\}$. By Proposition~\ref{extensionproposition}, there must be $g\in B_0$ and $y\in B_1$ such that $g\forkindep[x]y$. It must be the case that $g\subset P$ is generic over $\vd_y$: otherwise, there would be a dense set $C\subset P$ in $\vd_y$ such that $C\cap g=0$, by Proposition~\ref{separationproposition} there would be $D\subset P$ in $\vd_{x}$ such that $C\subset D$ and $g\cap D=0$, and this would contradict the genericity of $g$ over $\vd_x$. Since $p\Vvdash_y\phi(\bar\tau)$, this means that $\phi(\bar\tau/g)$ holds, contradicting the assumption that $g\in B_0$.
\end{proof}

\noindent As a result, below I drop the subscript from the relation $\Vvdash$. The next proposition is in effect the forcing theorem for $\Vvdash$; it is central for the numerous corollaries that follow.

\begin{proposition}
\label{baireproposition}
Suppose that $P$ is a countable poset, $x$ is a set of ordinals such that $P\in\vd_x$, $\bar\tau\in\vd_x$ is an $n$-tuple of $P$-names, $\phi$ is a formula in the language of set theory with $n$ free variables, and $g\subset P$ is a filter generic over $\vd_x$. The following are equivalent:

\begin{enumerate}
	\item $\phi(\bar\tau/g)$ holds;
	\item there is $p\in g$ such that $p\Vvdash\phi(\bar\tau)$.
\end{enumerate}
\end{proposition}

\begin{proof}
	(2) implies (1) by the definition of the $\Vvdash$ relation. To see that (1) implies (2), it is enough to show that the set $D=\{p\in P\colon p\Vvdash\phi(\bar\tau)$ or $p\Vvdash\lnot\phi(\bar\tau)\}$ is a dense subset of $P$; then, as this is a $\vd_x$ set, the filter $g$ must contain one of the conditions in $D$ and (2) follows.

Suppose towards a contradiction that the set $D\subseteq P$ is not dense, and let $p\in P$ be a condition with no element of $D$ below it.  Consider the space $\power(P)$ with the usual topology, and the space $Z\subset\power(P)$ of all subsets $h\subset P$ which are filters, and in addition for every $q\in P$, either $q\in h$ or else $h$ contains an element incompatible with $q$. It is not difficult to see that $Z$ is a $G_\gd$ subset of $\power(P)$, therefore Polish and in fact, if $P$ is atomless then $Z$ is homeomorphic to the Baire space. The topology of $Z$ is generated by sets $O_q=\{h\in Z\colon q\in h\}$.

Now, the Baire axiom shows that the set $A=\{h\in Z\colon\phi(\bar\tau/h)\}$ has the Baire property. Thus, there is a condition $q\leq p$ such that the set $A$ or its complement is comeager in $O_q$. Suppose for definiteness that the former is the case. Then there is a countable collection $\mathcal{D}$ of dense subsets of $P$ such that every filter $h\in Z$ containing $q$ and having nonempty intersection with every set in $\mathcal{D}$ belongs to $A$. Let $y$ be a set of ordinals such that $P, \tau, x, \mathcal{D}\in\vd_y$. The conclusion is that for every filter $h\subset P$ generic over $\vd_y$ and containing the condition $q$, $\phi(\bar\tau/h)$ holds, in other words $q\Vvdash\phi(\bar\tau)$ has been verified (note the use of Proposition~\ref{nodependproposition}). This in turn contradicts the choice of the condition $p$.
\end{proof}

\noindent There are two corollaries which may look technical but in fact, they are very important for the smooth development of balanced forcing.

\begin{corollary}
	For every set $x$ of ordinals, every countable poset $P\in\hvd_x$, and every filter $g\subset P$ generic over $\hvd_x$, $\hvd_x[g]=\hvd_{xg}$ holds.
\end{corollary}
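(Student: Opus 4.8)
The plan is to reduce the claimed equality to the standard fact that a transitive model of ZF is determined by its class of ordinals together with its sets of ordinals. Both $\hvd_x[g]$ and $\hvd_{xg}$ are models of ZFC --- the former as a forcing extension of $\hvd_x$, which is a ZFC model by the Fact in Section~\ref{definabilitysection}, the latter directly by that same Fact applied to (a set of ordinals coding) $\langle x,g\rangle$ --- and both contain every ordinal, since forcing adds none and every ordinal lies in $\vd_\emptyset$ with itself as transitive closure. Hence it suffices to show that $\hvd_x[g]$ and $\hvd_{xg}$ have the same subsets of every ordinal $\alpha$.

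Before that I would dispatch some bookkeeping. As $P$ is countable it is well-orderable, so $P\subseteq\vd_x$ by Corollary~\ref{wocorollary}, and consequently $\mathrm{tc}(s)\subseteq\mathrm{tc}(P)\subseteq\vd_x$ for every $s\subseteq P$; hence $\power(P)\cap\vd_x=\power(P)\cap\hvd_x$. In particular, ``$g$ is generic over $\hvd_x$'' coincides with ``$g$ is generic over $\vd_x$'' in the sense of the definition of $\Vvdash$, and, fixing an $\hvd_x$-bijection of $P$ with $\gw$, the filter $g$ is identified with a genuine set of ordinals, so $\vd_{xg}$ and $\hvd_{xg}$ make sense. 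The inclusion $\hvd_x[g]\subseteq\hvd_{xg}$ is then immediate: $\hvd_x[g]$ is transitive, every element of it is $\sigma/g$ for some $P$-name $\sigma\in\hvd_x\subseteq\vd_x$, and the valuation map $\sigma\mapsto\sigma/g$ is definable from $g$, so the entire transitive closure of any such $\sigma/g$ consists of valuations of names from $\hvd_x$ and therefore lies in $\vd_{xg}$.

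The real work is the reverse inclusion on sets of ordinals. Fix $\alpha$ and $a\subseteq\alpha$ with $a\in\hvd_{xg}$. By the Proposition of Section~\ref{definabilitysection} characterizing membership in $\vd_{xg}$, there is a function $F$ definable from a finite tuple $\bar\gamma$ of ordinals with $F(x,g)=a$. Let $\phi(v)$ be the forcing-language formula ``$v\in F(\check x,\dot g)$'', where $\dot g$ is the canonical $P$-name for the generic filter and $\bar\gamma$ is folded into the formula. For each $\xi<\alpha$ the set $D_\xi=\{p\in P\colon p\Vvdash\phi(\check\xi)$ or $p\Vvdash\lnot\phi(\check\xi)\}$ is dense: given $q$, either some $r\le q$ has $r\Vvdash\phi(\check\xi)$, or no $r\le q$ does, in which case $q\Vvdash\lnot\phi(\check\xi)$ by Proposition~\ref{baireproposition}(2). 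Each $D_\xi$, as well as the relation $\{\langle p,\xi\rangle\colon p\in P,\ \xi<\alpha,\ p\Vvdash\phi(\check\xi)\}$, is definable from $x,P,\alpha,\bar\gamma$, hence lies in $\vd_x$ (using that $\Vvdash\phi$ belongs to $\vd_x$), and its transitive closure is contained in $\mathrm{tc}(P\times\alpha)\subseteq\vd_x$; so both objects in fact lie in $\hvd_x$. Since $g$ meets each $D_\xi$ and $g$ is generic over $\vd_x$, the definition of $\Vvdash$ gives, for every $\xi<\alpha$, that $\xi\in a$ holds iff $\phi(\check\xi)$ is true under the valuation by $g$ iff some $p\in g$ satisfies $p\Vvdash\phi(\check\xi)$ (the nontrivial direction picks $p\in g\cap D_\xi$ and rules out $p\Vvdash\lnot\phi(\check\xi)$). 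Therefore the $P$-name $\tau=\{\langle\check\xi,p\rangle\colon\xi<\alpha,\ p\Vvdash\phi(\check\xi)\}$, which is an element of $\hvd_x$ by the same transitive-closure bookkeeping, satisfies $\tau/g=a$, so $a\in\hvd_x[g]$.

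Putting the two inclusions together, $\hvd_x[g]$ and $\hvd_{xg}$ are transitive models of ZFC with the same ordinals and the same sets of ordinals, hence are equal. The step I expect to be the main obstacle is the ``truth lemma'' for $\Vvdash$ used in the third paragraph, i.e.\ the density of the deciding sets $D_\xi$: this is exactly where Proposition~\ref{baireproposition}(2), and thus the Baire axiom, enters. A secondary point requiring care is that the auxiliary objects --- the relation $\Vvdash\phi$, the dense sets $D_\xi$, and the name $\tau$ --- must be verified to lie in $\hvd_x$ and not merely in $\vd_x$; this is precisely where the hypothesis $P\in\hvd_x$ (so that $\mathrm{tc}(P)\subseteq\vd_x$) is used.
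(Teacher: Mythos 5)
Your proof is correct and follows essentially the same route as the paper's: reduce to sets of ordinals, write $A=F(x,g)$ via a definable function, and use Proposition~\ref{baireproposition} to show that membership of each ordinal is decided by a condition in $g$ via the relation $\Vvdash$, thereby exhibiting $A$ as the valuation of a name in $\hvd_x$. You merely spell out details the paper elides (the density of the deciding sets, the identification of genericity over $\vd_x$ with genericity over $\hvd_x$, and the transitive-closure bookkeeping placing the name in $\hvd_x$), all of which check out.
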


\begin{proof}
	The left-to-right inclusion follows from the definition of a generic extension. For the right-to-left inclusion, it is only necessary to show that every set of ordinals definable from $x$ and $g$ belongs to $\hvd_x[g]$. To this end, let $A$ be a set of ordinals in $\hvd_{xg}$, and let $\phi$ be a formula such that $A=\{\alpha\colon \phi(x, g, \alpha)\}$. Let $\tau\in\hvd_x$ be the canonical $P$-name for the generic filter, and in the model $\hvd_x[g]$ consider the set $B=\{\ga\colon\exists p\in g\ \hvd_x\models p\Vvdash\phi(\check x, \tau, \check\ga)\}$. By Proposition~\ref{baireproposition}, $A=B$ holds, and the proof is complete.
\end{proof}

\begin{corollary}
	\label{genericitycorollary}
	Let $P$ be a countable forcing notion and $x, y$ be sets of ordinals such that $P\in\vd_x$, and $g\subset P$ be a $\vd_x$-generic filter. The following are equivalent:
	
	\begin{enumerate}
		\item $g\subset P$ is a filter generic over $\vd_{xy}$;
		\item $y\forkindep[x]g$.
	\end{enumerate}
\end{corollary}

\noindent Note that as $P$ is well-orderable, $P\subset\vd_x$ holds by Corollary~\ref{wocorollary}, so it makes sense to talk about $\vd_x$-generic filters as those filters on $P$ meeting all $\vd_x$ subsets of $P$ which are dense in $P$.

\begin{proof}
	For (2)$\to$(1), suppose that $y\forkindep[x]g$ holds, let $O\subset P$ is any $\vd_{xz}$-set which is open dense in $P$, and argue that it must have a nonempty intersection with $g$. If this failed, there would be a set $D\in\vd_x$ such that $g\subset D$ and $D\cap O$ is empty by Proposition~\ref{separationproposition}. Such a set must be somewhere dense since it is in $\vd_x$ and contains a $\vd_x$-generic filter as a subset. A somewhere dense set cannot be disjoint from the open dense set $O$. This contradiction shows that $O\cap g\neq 0$ holds as desired. 
	
	If the implication (1)$\to$(2) failed, then there would have to be a condition $p\in g$ such that $p\Vvdash\lnot\check y\forkindep[x] g$, so the set $A=\{z\colon p\Vvdash\lnot\check z\forkindep[x]\tau\}$ is nonempty, where $\tau$ is the canonical $P$-name for its generic filter. Use the extension property of $\forkindep$ to find $z\in A$ such that $z\forkindep[x]g$. The previously proved implication (2)$\to$(1) applied to $z$ and $g$ shows that $g$ is $\vd_{xz}$-generic filter on $P$, and this contradicts the choice of $p$ and $z$.
\end{proof}

\noindent To conclude this section, I prove two theorems which show that in the presence of the independence axiom, the Baire property of sets of reals has multiple consequences which in its basence are well-known not to follow. The first is the main point of the ground-breaking work of Solovay \cite{solovay:model}.

\begin{theorem}
\label{solovaytheorem}
Let $\mu$ be the usual Borel probability measure on $\cantor$. Every subset of $\cantor$ is $\mu$-measurable.
\end{theorem}

\begin{proof}
Let $A\subset\cantor$ be any set. Let $x$ be a set of ordinals such that $A\in\vd_x$, and let $\phi$ be a formula with parameter $x$ such that $A=\{z\in\cantor\colon\phi(z, x)\}$. Let $P$ be the poset of Borel $\mu$-positive subsets of $\cantor$ as evaluated in the model $\hvd_x$. Each $p\in P$ is then naturally identified with a Borel subset of $\cantor$ in the universe (as opposed to $\hvd_x$, for the theory of interpretations of Borel sets between models of set theory see \cite{z:interpretations}). Let $\tau$ be the $P$-name for the canonical generic element of $Z$ (the ``random real''). Let $B=\{p\in P\colon p\Vvdash\phi(\tau, \check x)\}$. Note that $B$ is a countable collection of Borel sets. It will be enough to show that $A=\bigcup B$ modulo a null set of exceptions.

Let $C\subset\cantor$ be the union of all $\mu$-null $G_\gd$-subsets of $\cantor$ coded in $\hvd_x$. By the inaccessibility axiom, this is a union of a countable family of $\mu$-null sets, therefore $\mu$-null. It will be enough to show that if $z\in\cantor\setminus C$ then $z\in A\liff z\in B$ holds. Let $z\in\cantor\setminus C$ be a point, and let $g=\{p\in P\colon z\in p\}$. By a theorem of Solovay \cite[Lemma 26.4]{jech:newset}, the set $g\subset P$ is a filter generic over $\hvd_x$; in addition, $z=\tau/g$. Now, if $z\in B$ then $z\in A$ holds by the definition of the set $B$. On the other hand, if $z\in A$ then $g$ must meet a condition $p$ such that $p\Vvdash\phi(\tau, \check x)$ by Proposition~\ref{baireproposition}, so $z\in B$.
\end{proof}

\noindent The following corollary verifies a feature of the Solovay model proved in \cite{mathias:happy}.

\begin{theorem}
\label{mathiastheorem}
For every set $A\subset [\gw]^{\aleph_0}$ there is an infinite set $b\subset\gw$ such that $[b]^{\aleph_0}$ is either a subset of $A$ or disjoint from $A$.
\end{theorem}

\begin{proof}
Let $x$ be a set of ordinals such that $A\in\vd_x$ holds, and let $\phi$ be a formula with parameter $x$ such that $A=\{z\in [\gw]^{\aleph_0}\colon\phi(z, x)\}$. Write $P$ for the Mathias forcing as evaluated in the model $\hvd_x$. $P$ is countable by the inaccessibility axiom. Let $\tau$ be the canonical $P$-name for its generic subset of $\gw$.

Consider the set $\gs$ which is the union of $\{\langle p, 0\rangle\colon p\Vvdash\phi(\tau, \check x)\}$ together with the set $\{\langle p, 1\rangle\colon p\Vvdash\lnot\phi(\tau, \check x)\}$. Clearly, $\gs\in\hvd_x$ holds, and Proposition~\ref{baireproposition} implies that $\gs$ is a name for an element of $2$. Working in the model $\hvd_x$, by a result of Mathias \cite[Lemma 26.34]{jech:newset} there is an infinite set $c\subset\gw$ in $\vd_x$ such that the condition $\langle 0, c\rangle\in P$ decides the value of $\gs$. Suppose for definiteness that this condition forces $\gs=\check 0$. Step out of the model $\hvd_x$ and let $b\subset c$ be a Mathias generic real. I claim that $[b]^{\aleph_0}\subset A$.

Indeed, if $d\subset b$ is an infinite set, then again by a result of Mathias \cite[Corollary 26.38]{jech:newset} $d$ is a $P$-generic real meeting the condition $\langle 0, c\rangle$, so its associated generic filter contains a condition $p\in P$ such that $\langle p, \check 0\rangle\in\gs$. By the definition of the name $\gs$, it follows that $d\in A$ as required.
\end{proof}

\section{The Gandy--Harrington axiom}
\label{ghasection}

The previous axioms do not seem to address generalizations of any dichotomies such as the Silver dichotomy to all sets, even though suitable generalization are well-known to hold in the Solovay model \cite{kanovei:ulmsolovay, kanovei:boundedness}. There is a simple forcing-free axiom which holds in the Solovay model and seems to imply them all. It is motivated by the proofs of effective descriptive set theory as presented in \cite{kanovei:book}. 

\begin{definition}
Let $x$ a set of ordinals and $A\in\vd_x$ be a nonempty set. The \emph{Gandy--Harrington game} on $x$ and $A$ is played between Players I and II, who alternately play nonempty $\vd_x$-sets $A_n\subset A_0$ such that $A=A_0\supseteq A_1\supseteq \dots$. Player II wins if the intersection $\bigcap_nA_n$ is nonempty.
\end{definition}

\begin{definition}
The \emph{Gandy--Harrington axiom} is the statement that for every set $x$ of ordinals and every nonempty $\vd_x$ set $A$, Player II has a winning strategy in the Gandy--Harrington game on $x$ and $A$.
\end{definition}

\noindent The results below are proved in the theory ZF+DC plus the definability, independence, and Gandy--Harrington axioms. The take-home message is that these are pure fusion-type proofs, with no forcing and no reflection theorems appearing anywhere in them. The first result is a variation on the Silver dichotomy \cite{silver:dichotomy}, proved in the Solovay model by Stern \cite{stern:borel}.

\begin{theorem}
\label{silvertheorem}
Let $E$ be an equivalence relation on $\cantor$. Either the set of all $E$-equivalence classes is well-orderable, or there is a perfect set of $E$-inequivalent elements.
\end{theorem}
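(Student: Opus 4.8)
The plan is to fix a set $x$ of ordinals coding $E$ (so $E\in\vd_x$), and let $P$ be the Gandy--Harrington-style forcing whose relevance is mediated by the Gandy--Harrington axiom. The dichotomy splits on whether a certain ``large'' condition exists: specifically, consider whether there is a nonempty $\vd_x$-set $A\subseteq\cantor$ such that for every nonempty $\vd_x$-subset $A'\subseteq A$ and every $z\in A'$, the $E$-class of $z$ meets $A'$ in more than one point --- informally, $A$ has ``no $E$-class isolated in any $\vd_x$-subset''. If no such $A$ exists (in particular starting from $A=\cantor$), I would argue that every point lies in an $E$-class which is captured by a single $\vd_x$-set, so the quotient injects into the well-orderable class of $\vd_x$-sets, giving well-orderability of $\cantor/E$. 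If such an $A$ does exist, I would build a perfect set of pairwise $E$-inequivalent points.

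The construction of the perfect set is the heart of the argument and should be a Cantor-scheme fusion driven by the Gandy--Harrington axiom together with the product property of $\indep$. Fix a winning strategy for Player II in the Gandy--Harrington game on $x$ and $A$ (and on its relevant derived sets). I would recursively assign to each node $s\in\bintree$ a nonempty $\vd_x$-set $A_s\subseteq A$ and a partial run of the game, arranging $A_{s^\smallfrown 0}, A_{s^\smallfrown 1}\subseteq A_s$ and, crucially, that points chosen below incomparable nodes are $E$-inequivalent. To force inequivalence I would use independence: whenever I split $A_s$ into $A_{s^\smallfrown 0}$ and $A_{s^\smallfrown 1}$, I pick an independent pair $y_0\indep[x]y_1$ with $y_i\in A_{s^\smallfrown i}$ (using duplication / Corollary~\ref{tuplecorollary}); since $y_0\indep[x]y_1$ and $E\in\vd_x$, if $y_0\mathrel{E}y_1$ held then the $E$-class of $y_0$ would lie in $\vd_{xy_0}\cap\vd_{xy_1}=\vd_x$ by Proposition~\ref{intersectionproposition}, contradicting the defining property of $A$ at the subset $A_s$; so $y_0\not\mathrel E y_1$. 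Then the product property (Proposition~\ref{separationproposition}) yields disjoint $\vd_x$-sets separating the two $E$-classes, and I shrink $A_{s^\smallfrown 0}$ and $A_{s^\smallfrown 1}$ inside these, so that henceforth every point descending from $s^\smallfrown 0$ is $E$-inequivalent to every point descending from $s^\smallfrown 1$. Simultaneously I play the Gandy--Harrington game down each branch so that along each branch $b\in\cantor$ the sets $(A_{b\restriction n})_n$ form a legal run against the fixed strategy; then $\bigcap_n A_{b\restriction n}\neq\emptyset$, and picking one point per branch gives a continuous (hence perfect-image) map from $\cantor$ into $\cantor$ with pairwise $E$-inequivalent values, whose range is the desired perfect set. (One should also ensure the branch maps are injective and continuous by shrinking into basic open sets refining along the tree --- a routine bookkeeping addition.)

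**The main obstacle** I anticipate is the dichotomy's split itself: verifying that in the ``no bad $A$'' case the quotient is genuinely well-orderable. The subtlety, already flagged in the Borel-sets proof above, is that the $\vd_x$-set $C_z$ capturing the $E$-class of a point $z$ may have no $\vd_x$-element, so one cannot naively define an injection $\cantor/E\to\vd_x$ by ``pick the canonical capturing set.'' I expect to handle this the way the proof of Theorem~\ref{sterntheorem} handles the analogous difficulty: form the tree of descending sequences of nonempty $\vd_x$-subsets that ``stabilize'' the $E$-class, and use a duplication/independence argument (Proposition~\ref{duplicationproposition} applied to an independent point $y$ with $f\indep[x]y$) to show the resulting $\vd_x$-object determines the $E$-class uniquely; since there are only set-many $\vd_x$-objects and $\vd_x$ is canonically well-ordered, this well-orders $\cantor/E$. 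A secondary, more mechanical obstacle is checking that the Gandy--Harrington game can be played ``uniformly'' down all $2^{<\gw}$-many branches at once; but since at each finite stage only finitely many nodes are active and the strategy is fixed in $\vd_x$, a standard fusion argument interleaving the splitting steps and the strategy-response steps suffices, and the well-orderability of $\vd_x$ lets one make all the requisite choices without appealing to more choice than DC provides.
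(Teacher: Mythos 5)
Your overall shape---a Cantor-scheme fusion driven by the Gandy--Harrington axiom, with $E$-inequivalence extracted from independence via Proposition~\ref{intersectionproposition}---is the right one, but there is a genuine gap in the mechanism that propagates inequivalence from the witnesses chosen at a splitting node to the actual limit points of the scheme. The step ``Proposition~\ref{separationproposition} yields disjoint $\vd_x$-sets separating the two $E$-classes, and I shrink $A_{s^\smallfrown 0}$, $A_{s^\smallfrown 1}$ inside these, so that henceforth every descendant of $s^\smallfrown 0$ is inequivalent to every descendant of $s^\smallfrown 1$'' fails twice over. First, Proposition~\ref{separationproposition} applies only to disjoint sets $A_0\in\vd_{xy_0}$, $A_1\in\vd_{xy_1}$ that are \emph{subsets of} $\vd_x$; the classes $[y_0]_E,[y_1]_E$ are subsets of $\cantor$, and under the inaccessibility axiom $\cantor\cap\vd_x$ is countable, so they are not subsets of $\vd_x$ and cannot be fed to that proposition. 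Second, even granting a $\vd_x$-set $D$ with $[y_0]_E\subseteq D$ and $[y_1]_E\cap D=0$, restricting $A_{s^\smallfrown 0}$ to $D$ and $A_{s^\smallfrown 1}$ to its complement does not make their elements pairwise inequivalent, since $D$ need not be $E$-invariant; and the product property only controls \emph{independent} pairs from the two sides. The only inequivalence argument available is the one you correctly identify ($y_0\forkindep[x]y_1$ plus $[y_0]_E\notin\vd_x$ forces $y_0\not\mathrel E y_1$), and it applies solely to independent pairs. Hence you must arrange that the limit points of the scheme are themselves pairwise (in the paper, finite-tuple-wise) independent over $x$. This is exactly what the paper's Proposition~\ref{perfectproposition} does: alongside the sets $A_t$ it carries, for every injective tuple $\bar u$ of nodes, a nonempty $\vd_x$-set $B_{\bar u}$ of \emph{independent tuples} in the corresponding product and a play of the Gandy--Harrington game on $B_{\bar u}$, so that the limit tuple lies in the intersection of those moves and is therefore independent. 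Playing the game only down single branches, as you propose, secures nonempty intersections but not independence of the limits, and the construction collapses at that point.

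A secondary problem is the dichotomy split itself. Your ``bad set'' condition (every class meets every nonempty $\vd_x$-subset of $A$ in more than one point) does not support the contradiction you invoke: $[y_0]_E\in\vd_x$ is perfectly consistent with that condition, so the claimed clash ``contradicting the defining property of $A$'' does not go through; and your treatment of the ``no bad $A$'' case is left as a sketch of a tree argument modeled on Theorem~\ref{sterntheorem}. The paper avoids all of this by splitting on whether every $E$-class belongs to $\vd_x$: if yes, the quotient is a subset of the canonically well-ordered class $\vd_x$ and hence well-orderable with no further argument; if no, $A=\{z\in\cantor\colon [z]_E\notin\vd_x\}$ is a nonempty $\vd_x$-set with no $\vd_x$-elements (Corollary~\ref{wocorollary}), and Proposition~\ref{perfectproposition} applied to this $A$ produces the perfect set, whose pairwise independence immediately yields pairwise inequivalence. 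I recommend adopting that split and restructuring your fusion so that the game is also played on the tuple sets $B_{\bar u}$.
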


\noindent The theorem is a corollary to a more technical general proposition.

\begin{proposition}
\label{perfectproposition}
Let $x$ be a set of ordinals and $A\subset\cantor$ be a $\vd_x$-set of reals. Either $A$ is well-orderable, or $A$ contains a perfect subset of elements which are in finite tuples independent over $x$.
\end{proposition}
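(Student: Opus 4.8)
The plan is as follows. If $A$ is well-orderable we are already in the first alternative, so by Corollary~\ref{wocorollary} assume $A\not\subseteq\vd_x$, and — replacing $A$ by the $\vd_x$-set $A\setminus\vd_x$ (a definable modification, since $\vd_x$ is a class definable from $x$, and any perfect subset of it whose finite tuples are independent over $x$ is such a subset of $A$) — assume moreover that no element of $A$ lies in $\vd_x$. Two facts follow at once. Every nonempty $\vd_x$-subset $B$ of $A$ is infinite: a finite set is well-orderable, so by Corollary~\ref{wocorollary} a finite such $B$ would satisfy $B\subseteq\vd_x$, contradicting $B\cap\vd_x=\emptyset$. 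And the entries of any finite tuple of elements of $A$ that is independent over $x$ are pairwise distinct, since two equal entries would form an $\forkindep[x]$-independent pair and hence lie in $\vd_x$ by Proposition~\ref{intersectionproposition}. Consequently, for each $k\in\gw$ the set $A^{[k]}$ of tuples $\langle z_1,\dots,z_k\rangle\in A^k$ independent over $x$ is a nonempty $\vd_x$-set (nonempty by Corollary~\ref{tuplecorollary} applied to $\langle A,\dots,A\rangle$), with $A^{[1]}=A$; using the Gandy--Harrington axiom, fix for each $k$ a winning strategy $\sigma_k$ for Player~II in the Gandy--Harrington game on $x$ and $A^{[k]}$.

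The goal is to build a Cantor scheme $\langle A_t\colon t\in\bintree\rangle$ of nonempty $\vd_x$-subsets of $A$ — with $A_\emptyset=A$, $A_{t^\smallfrown i}\subseteq A_t$, $\diam(A_t)\leq 2^{-|t|}$, and $A_{t^\smallfrown 0}$, $A_{t^\smallfrown 1}$ in disjoint basic clopen sets — carried together with auxiliary data: for every $m$ and every tuple $\bar s$ of pairwise distinct nodes of $2^m$, a partial play of the Gandy--Harrington game on $x$ and $A^{[|\bar s|]}$ in which Player~II has followed $\sigma_{|\bar s|}$, whose last move $Q_{\bar s}$ is a nonempty $\vd_x$-subset of $A^{[|\bar s|]}$, subject to the coherence demands $Q_{\langle t\rangle}=A_t$, $\proj_i(Q_{\bar s})\subseteq A_{s_i}$ for every coordinate $i$, and $Q_{\bar s}\subseteq Q_{\bar s'}$ with the play of $\bar s$ extending that of $\bar s'$ whenever $\bar s'$ is the tuple of level-$\ell$ restrictions of the entries of $\bar s$ and these restrictions are still pairwise distinct. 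Granting such a system, the theorem follows: for distinct $b_1,\dots,b_k\in\cantor$, letting $m_0$ be the least level at which $b_1\restriction m_0,\dots,b_k\restriction m_0$ are pairwise distinct, the sets $Q_{\langle b_1\restriction\ell,\dots,b_k\restriction\ell\rangle}$ for $\ell\geq m_0$ form a complete run of the Gandy--Harrington game on $x$ and $A^{[k]}$ played by $\sigma_k$, hence have nonempty intersection; any $\langle w_1,\dots,w_k\rangle$ in that intersection lies in $A^{[k]}$, so is independent over $x$, while the coherence $\proj_i(Q_{\bar s})\subseteq A_{s_i}$ together with $\diam(A_t)\to 0$ forces $w_i$ to equal the point $z_{b_i}$ determined by the branch $b_i$. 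Thus $\langle z_{b_1},\dots,z_{b_k}\rangle$ is independent over $x$, and since the diameter and disjointness conditions make $b\mapsto z_b$ a continuous injection of $\cantor$ into $A$, its range is a perfect subset of $A$ all of whose finite tuples of distinct elements are independent over $x$.

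The recursion producing the system is the substance of the argument, and it is here that duplication and the product property do the work. Roughly, in passing from level $m$ to level $m+1$ one would: realize, by Corollary~\ref{tuplecorollary}, an independent-over-$x$ tuple inside the full level-$m$ position; use the product property in the form of Corollary~\ref{duplecorollary}, with that position as target, to replace the single-node sets by $\vd_x$-subsets $B_s$ around the entries of that tuple, so arranged that \emph{every} tuple independent over $x$ drawn with one entry from each finite subfamily of the $B_s$ automatically lands back inside the corresponding $Q_{\bar s}$ (completing it to a full tuple one coordinate at a time via Propositions~\ref{duplicationproposition} and~\ref{tupleproposition}); split each infinite set $A_s\cap B_s$ into a sibling pair inside disjoint clopen sets of diameter at most $2^{-(m+1)}$; and then let Player~I restrict each running game on $A^{[|\bar s|]}$ to the product of the appropriate new single-node traces — a nonempty move, witnessed by such an independent tuple — and let $\sigma_{|\bar s|}$ respond, starting afresh the games attached to tuples two of whose members are newly created siblings.

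The step I expect to be the genuine obstacle is keeping this entire family of interleaved Gandy--Harrington games mutually coherent across one recursive step: reconciling the single-node sets (which shrink under $\sigma_1$), the multi-node positions (which shrink under the mutually uncoordinated strategies $\sigma_k$), and all of the nonemptiness requirements — both those needed to continue the running plays and those needed to launch the new ones at a splitting. The device that resolves this is precisely the combination of the duplication property, used at each level to manufacture a single independent tuple that simultaneously witnesses every nonemptiness demand, and the product property, used to \emph{lock in} that tuple's independence so that the clopen refinements which follow remain inside the required positions without any further appeal to $\forkindep$. Everything else is a routine Cantor-scheme construction, resting on the fact that a winning strategy for Player~II in the Gandy--Harrington game behaves, along a single branch of the scheme, like a winning strategy in a Choquet-type game.
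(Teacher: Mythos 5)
Your proposal is correct and follows essentially the same route as the paper: reduce to the case $A\cap\vd_x=\emptyset$, run a Cantor scheme of nonempty $\vd_x$-sets interleaved with Gandy--Harrington plays on the sets of independent $k$-tuples, use Corollary~\ref{tuplecorollary} to witness nonemptiness of Player I's moves and Corollary~\ref{duplecorollary} to shrink the single-node sets so that independent tuples drawn from them land inside the strategies' last moves, and extract the continuous injection from the shrinking diameters. The only cosmetic differences are that you fix one strategy per arity and restart plays at each splitting node, where the paper fixes a fresh strategy per newly injective tuple of nodes, and you state the containment between multi-node positions and single-node sets via projections rather than via the superset condition on the strategies' last moves; both proofs leave the same bookkeeping to the reader.
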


\begin{proof}
If $A\subset\vd_x$ holds, then the first alternative occurs. If $A\not\subset\vd_x$, then replacing $A$ with $A\setminus\vd_x$ if necessary, I may assume that $A$ contains no $\vd_x$ elements. By recursion on $n\in\gw$ build

\begin{itemize}
\item nonempty $\vd_x$ subsets $A_t\subseteq A$ for $t\in 2^n$ such that all elements of $A_t$ share the same initial segment of length $n$;
\item for each nonzero $m\in\gw$, each $k\leq n$ and each injective $m$-tuple $\bar u\in 2^{k}$ a nonempty $\vd_x$-set $B_{\bar u}\subset\{\bar y\in A^m\colon\bar y$ is independent over $x$ and $\forall i\in m\ \bar u(i)\subset\bar y(i)\}$ and a winning strategy $\gs_{\bar u}$ for Player II in the Gandy--Harrington game associated with $x$ and $B_u$;
\item for each nonzero $m\in\gw$, each $k\leq n$, each injective $m$-tuple $\bar u\in 2^{k}$, and each $m$-tuple $\bar v\in 2^n$ such that $\forall i\in m\ \bar u(i)\subseteq\bar v(i)$ a play $p(\bar u, \bar v)$ against the strategy $\gs_{\bar u}$ of length at least $n+1-k$ in which the strategy $\gs_{\bar u}$ makes the last move which is a superset of the set $\{\bar y\in\prod_{t\in\bar v}A_t\colon \bar y$ is independent over $x\}$;
\end{itemize}

\noindent and do it all in such a way that if $k\leq n_0<n_1$ and $\bar u\in 2^{k}$ and $\bar v_0\in 2^{n_0}$ and $\bar v_1\in 2^{n_1}$ are injective $m$-tuples and $\forall i\in m\ \bar u(i)\subseteq\bar v_0(i)\subseteq\bar v_1(i)$, then $p(\bar u, \bar v_0)\subseteq p(\bar u, \bar v_1)$.

The recursion step is performed by one-by-one extending the plays $p(\bar u,\bar v)$ and applying Corollary~\ref{duplecorollary} repeatedly. The straightforward bookkeeping details are left to the reader.

During the recursion on $n$, there are some choices to be made, namely the choice of the strategies $\gs_{\bar u}$. The axiom of dependent choices shows that the recursion can be performed. Once this is done, for each $t\in\cantor$ consider the play $p_t=\bigcup_np_{\{0\}, t_n}$ of the Gandy--Harrington game with $x$ and $A$ against the strategy $\gs_{\{0\} }$. The intersection of the moves played along the game must be nonempty, and it is a singleton by the first item of the recursion on $n$. Let $f\colon\cantor\to\cantor$ be the function assigning to each $t\in\cantor$ the unique point in this singleton set; the function $f$ is continuous. If $\bar y\in\cantor$ is an injective $m$-tuple, find some $k$ such that the $m$-tuple of initial segments of entries on $\bar y$ of length $k$ is injective. Let $p$ be the union of all plays $p(\bar u, \bar v_n)$ where $\bar v_n$ is the tuple of all initial segments of length $n$ of entries on $\bar y$.  The intersection of the sets played along $p$ must be nonempty, and there is only one candidate for its element, namely the tuple $\langle f(\bar y(i))\colon i\in m\rangle$. This tuple must be independent over $x$ by the choice of $B_{\bar u}$ in the second recursion demand above. Thus, the range of $f$ is a perfect subset of $A$ consisting of elements in finite tuples independent over $x$.
\end{proof}

\begin{proof}
To prove Theorem~\ref{silvertheorem}, suppose that $E$ is an equivalence relation on $\cantor$. Let $x$ be a set of ordinals such that $E\in\vd_x$. If all equivalence classes of $E$ are $\vd_x$, then the set of all $E$-classes is well-orderable. Otherwise, one can consider the nonempty $\vd_x$-set $A=\{z\in\cantor\colon [z]_E\notin\vd_x\}$. Clearly, no element of $A$ is in $\vd_x$, so $A$ is not well-orderable by Corollary~\ref{wocorollary}. By Proposition~\ref{perfectproposition}, there is a perfect set $C\subset A$ consisting of elements pairwise independent over $x$. It turns out that $C$ consists of pairwise $E$-inequivalent elements: if $y_0, y_1\in\cantor$ are distinct elements of $C$, then $y_0\forkindep[x]y_1$, so $\vd_{xy_0}\cap\vd_{xy_1}=\vd_x$ by Proposition~\ref{intersectionproposition}. So, if $y_0\mathrel Ey_1$ were the case, their common equivalence class would belong to $\vd_x$, contradicting the choice of the set $A$.
\end{proof}

\noindent The second application uses the graph $G_0$ on $\cantor$ and generalizes its dichotomy as isolated by Kechris, Solecki, and Todorcevic \cite{kechris:chromatic}. Let $u_n\in 2^n$ be a string for each $n\in\gw$ selected in such a way that the collection $\{u_n\colon n\in\gw\}$ is dense in $2^{<\gw}$. Let $G_0$ be the graph on $2^\gw$ connecting two sequences $x, y$ if they differ in exactly one entry, and there is $n\in\gw$ such that $u_n$ is exactly the longest common initial segment of $x$ and $y$.

\begin{theorem}
\label{g0theorem}
Let $G$ be a graph on $\baire$. Either $\baire$ can be covered by a well-ordered set of $G$-anticliques, or there is a continuous homomorphism of $G_0$ to $G$.
\end{theorem}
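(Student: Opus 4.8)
The plan is to imitate the structure of Theorem~\ref{silvertheorem}: reduce to a technical statement and prove that by a Gandy--Harrington fusion. Fix a set $x$ of ordinals with $G\in\vd_x$, and let $B=\baire\setminus\bigcup\{C\colon C\in\vd_x,\ C\subseteq\baire$, and $C$ is a $G$-anticlique$\}$. The collection whose union is being removed is a set of subsets of $\baire$; it is definable from $x$ (and $G\in\vd_x$), hence a $\vd_x$-set all of whose elements lie in $\vd_x$, hence well-orderable by Corollary~\ref{wocorollary}. Consequently, if $B=\emptyset$ then $\baire$ is covered by a well-ordered set of $G$-anticliques and the first alternative of the theorem holds. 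So assume $B\neq\emptyset$. Then $B\in\vd_x$, and the crucial property of $B$ is that \emph{every nonempty $\vd_x$-subset $C\subseteq B$ contains a $G$-edge}: otherwise $C$ would be a $G$-anticlique in $\vd_x$, hence a subset of $\baire\setminus B$, contradicting $\emptyset\neq C\subseteq B$. It remains to produce a continuous homomorphism of $G_0$ into $B$; I would isolate this as a general proposition, since only the displayed property of $B$ enters.

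The homomorphism is built by a recursion on $n\in\gw$ which constructs: nonempty $\vd_x$-sets $A_s\subseteq B$ for $s\in 2^n$, whose elements share a common initial segment of length $n$, with $A_{s^\smallfrown i}\subseteq A_s$; and, for each $m<n$ and each $w\in 2^{n-m-1}$, a nonempty $\vd_x$-set $R_{m,w}$ of $G$-edges with $R_{m,w}\subseteq A_{u_m^\smallfrown 0^\smallfrown w}\times A_{u_m^\smallfrown 1^\smallfrown w}$ and $R_{m,w^\smallfrown j}\subseteq R_{m,w}$. Exactly as in Proposition~\ref{perfectproposition}, all of these sets are woven into plays against fixed winning strategies for Player~II in the Gandy--Harrington games on $x$ and $B$ and on $x$ and the various $R_{m,\emptyset}$ (the strategies being chosen by dependent choice), so that along every branch the relevant decreasing intersections of moves are nonempty. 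At the step from $n$ to $n+1$, the set $A_{u_n}$ is a nonempty $\vd_x$-subset of $B$, hence its $\vd_x$-set of $G$-edges is nonempty; this is used to split $A_{u_n}$ into $A_{u_n^\smallfrown 0}$ and $A_{u_n^\smallfrown 1}$ and to initiate $R_{n,\emptyset}$, while every already-active $R_{m,w}$ is split into $R_{m,w^\smallfrown 0}$ and $R_{m,w^\smallfrown 1}$ and the affected node-sets are split so as to carry the corresponding coordinate projections.

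Once the recursion is complete, for $s\in\cantor$ the intersection $\bigcap_n A_{s\restriction n}$ is nonempty (by the Gandy--Harrington game) and a singleton (by the common-initial-segment condition); let $f(s)$ be its unique element. The initial-segment condition makes $f\colon\cantor\to\baire$ continuous with range inside $B$. If $s\mathrel{G_0}t$, say $u_n=s\restriction n=t\restriction n$, $s(n)\neq t(n)$, and $s\restriction[n+1,\gw)=t\restriction[n+1,\gw)=:z$, then $\langle f(s),f(t)\rangle$ is the unique element of $\bigcap_k R_{n,z\restriction k}$ --- nonempty by the Gandy--Harrington game, a singleton because $R_{n,w}\subseteq A_{u_n^\smallfrown 0^\smallfrown w}\times A_{u_n^\smallfrown 1^\smallfrown w}$ --- and every element of every $R_{n,w}$ is a $G$-edge, so $f(s)\mathrel{G}f(t)$. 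Thus $f$ is the required continuous homomorphism.

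The main obstacle is the bookkeeping in the recursion step: a single node-set $A_s$ can be a coordinate of several active edge-sets at once, and splitting it must be done compatibly with splitting all of those while keeping each resulting edge-set nonempty. This is possible because at level $n$ the incidence pattern between node-sets and edge-sets is precisely a finite approximation of $G_0$ --- in fact a tree, as there are $2^n$ node-sets and $2^n-1$ edge-sets and $G_0$ is acyclic (as are its finite approximations, which is easily checked) --- so one may root this tree and refine everything in a single outward pass, at each edge passing to a nonempty $\vd_x$-subset of the current edge-set whose relevant projection lies inside the already-refined parent node-set. Organizing this pass together with the advancing of the Gandy--Harrington plays is exactly the kind of routine fusion bookkeeping deferred in Proposition~\ref{perfectproposition}, and I would likewise leave it to the reader.
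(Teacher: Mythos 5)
Your overall architecture coincides with the paper's: the same reduction to the set $B$ of points not covered by $\vd_x$-anticliques, the same key observation that every nonempty $\vd_x$-subset of $B$ must contain a $G$-edge, a fusion driven by Gandy--Harrington strategies attached both to node-sets and to edge-sets, the creation of one new edge-set at the $u_n$-coordinate at stage $n$, and the same extraction of the continuous homomorphism from singleton intersections. The only genuine divergence is the data structure carried through the recursion, and it is exactly there that the bookkeeping you defer is not routine as you have set it up. You keep a separate set $A_s$ for each node $s\in 2^n$ and a separate set $R_{m,w}$ for each edge, tied together only by the containment $R_{m,w}\subseteq A_{u_m^\smallfrown 0^\smallfrown w}\times A_{u_m^\smallfrown 1^\smallfrown w}$. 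With the containment in that direction, the outward pass you describe can die: once the parent node-set has been refined to some nonempty $\vd_x$-subset $A'_p$ (for instance by the strategy's answer in its play, over which you have no control), the set $R\cap(A'_p\times\baire)$ may well be empty, since nothing forces $A'_p$ to meet the projection of $R$ onto the parent coordinate.

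What makes the single outward pass work is the \emph{opposite} containment: each node-set must be contained in (in practice equal to) the projection of every incident edge-set onto that coordinate. Then shrinking a node-set always pulls back to a nonempty subset of each incident edge-set, the new projections onto the opposite endpoints are nonempty subsets of the old node-sets there, and the acyclicity of the finite approximations --- which you correctly identify; they are trees with $2^n$ vertices and $2^n-1$ edges --- guarantees that the propagation terminates without ever revisiting a vertex. The paper sidesteps this issue entirely by carrying at level $n$ a single $\vd_x$-set $A_n$ of \emph{global} tuples (homomorphisms from the finite approximation $H_n$ into $G$) and running all the plays on its coordinate projections: shrinking any one projection and pulling back to the tuple set is automatically nonempty, and every other projection shrinks coherently for free. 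Your local formulation is salvageable precisely because the approximations are trees, but the arc-consistency invariant has to be stated in the right direction; either fix the invariant or adopt the tuple-set formulation. Everything else in your argument is correct and matches the paper.
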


\begin{proof}
Let $x$ be a set of ordinals such that $G\in\vd_x$. Either $\baire$ is covered by $G$-anticliques which are $\vd_x$, in which case it is covered by a well-ordered set of $G$-anticliques. If this does not occur, let $A=\baire\setminus\bigcup\{B\subset\baire\colon B\in\vd_x$ is a $G$-anticlique$\}$. The set $A$ is $\vd_x$ and nonempty; I will produce a continuous homomorphism of $G_0$ to $G$ whose range is a subset of $A$. 

Pick a winning strategy $\gs$ for Player II in the Gandy--Harrington game associated with $x$ and $A$. For each number $n\in\gw$, let $H_n$ be the graph on $2^n$ connecting strings $s$ and $t$ if there is $i\in n$ such that $u_i$ is the longest common initial segment of $s$ and $t$, and $s$ and $t$ differ only at $i$-th entry. By recursion on $n\in\gw$ build

\begin{itemize}
\item nonempty $\vd_x$ sets $A_n$ consisting of maps from $2^n$ to $\baire$ which are homomorphisms from $H_n$ to $G$. In addition, the projection of $A_n$ into any coordinate $t\in 2^n$ consists of points in $\baire$ which share the same initial segment of length $n$;
\item for each $t\in 2^n$ build plays $p_t$ against the strategy $\gs$ of length at least $n$ in which strategy $\gs$-plays the last move, which is a superset of the projection of $A_n$ to coordinate $t$;
\item for each $i$ build a nonempty $\vd_x$-set $B_i\subset\{\langle y, z\rangle\in A\times A\colon y\mathrel Gz\}$ and a winning strategy $\gs_i$ for Player II in the Gandy-Harrington game associated with $x$ and $B_i$;
\item for every pair $s\mathrel H_nt$ of distinct elements of $2^n$ with longest common initial segment $u_i$ listed so that $u^\smallfrown 0\subseteq s$ and $u^\smallfrown 1\subseteq t$ build a play $p_{s, t}$ against the strategy $\gs_u$ of length at least $n-|u|$ in which the strategy $\gs_u$ makes the last move which is a superset of the projection of $A_n$ into the coordinates $s$ and $t$.
\end{itemize}

\noindent To set up the recursion, let $p_0$ be the two-move counterplay of the Gandy--Harrington game associated with $x$ and $A$ against $\gs$ ending with the strategy playing the set $A_0$. The recursion step is performed in three stages. In the first stage, extend each of the plays $p_t$ by one step and shrink the set $A_n$ to maintain the third demand above. In the second stage, for each $H_n$-connected strings $s, t\in 2^n$ extend the plays $p_{s, t}$ by one step and shrink the set $A_n$ again to maintain the fourth demand above. The result will be a nonempty $\vd_x$ set $A'_n\subseteq A_n$, and plays $p'_t$ and $p'_{st}$.

In the end, note that the projection of $A'_n$ into the $u_n$ coordinate is a nonempty $\vd_x$-set, a subset of $A$. As such, it is not a $G$-anticlique. Pick a winning strategy $\gs_u$ for Player II in the game associated with $x$ and the nonempty set $G\cap A'_n\times A'_n$, and let $p_{u_n}$ be the two move play of this game in which the strategy makes the last move $B$. For each map $f\colon 2^{n+1}\to\baire$ define $f_0\colon 2^n\to\baire$ by $f_0(t)=f(t^\smallfrown 0)$ and $f_1\colon 2^n\to\baire$ by $f_1(t)=f(t\smallfrown 1)$. Now let $A_{n+1}$ be the set of all functions $f\colon 2^{n+1}\to\baire$ such that $f_0\in A'_n$, $f_1\\in A'_n$, and $\langle f(u_n^\smallfrown 0), f(u_n^\smallfrown 1)\rangle\in B$. By the construction, this is a nonempty $\vd_x$-set consisting of homomorphisms from $H_{n+1}$ to $G$. For each $t\in 2^{n+1}$ let $p_{t^\smallfrown 0}=p_{t^\smallfrown 1}=p'_t$, and for each pair $s\mathrel H_nt$ let $p_{s^\smallfrown 0, t^\smallfrown 0}=p_{s^\smallfrown 1, t^\smallfrown 1}=p'_{s, t}$. It is not difficult to see that the recursion demands are satisfied.

The axiom of dependent choices shows that the recursion can be performed. Once this is done, for each $t\in\cantor$ consider the play $p_t=\bigcup_np_{t_n}$. The intersection of the moves played along the game must be nonempty, and it is a singleton by the first item of the recursion on $n$. Let $f\colon\cantor\to\cantor$ be the function assigning to each $t\in\cantor$ the unique point in this singleton set; the function $f$ is continuous. If $s, t\in\cantor$ are distinct and for some $i\in\gw$, $u_i$ is their longest common initial segment and $s$ and $t$ differ only at $i$-th entry, consider the play $\bigcup_np_{s\restriction n, t\restriction n}$ against the strategy $\gs_i$. By the last item of the recursion on $n$, the intersection of the sets played along $p_{s, t}$ must be nonempty, and there is only one candidate for its element, namely the pair $\langle f(s), f(t)\rangle$. It follows that $f(s)\mathrel Gf(t)$. Thus, $f$ is a continuous homomorphism of $G_0$ to $H$ as desired.
\end{proof}

\section{Models for the axioms}
\label{modelsection}

In this section, I will show that one version of the Solovay model satisfies all of the axioms mentioned in this paper. There is nothing novel in these proofs.

Let $V$ be a transitive model of ZFC containing all ordinals, let $\kappa$ be a cardinal strongly inaccessible in $V$, and $G\subset\coll(\gw, <\kappa)$ be a filter generic over $V$. In the model $V[G]$, form the model $W=V(\mathbb{R})$ which is the smallest transitive model of ZF containing $V$ and $\power(\gw)$ as subsets. In order for the model $W$ to satisfy the axioms stated above, one has to make some assumptions on the ground model $V$ to make sure that in all forcing extensions, $V\subseteq\hvd$ holds. One possibility is to assume that $V$ is the constructible universe; another is for $V$ to have rigid cardinal arithmetic, such as

\begin{enumerate}
\item[(*)] for every ordinal $\gb$ and every set $x\subset\gb$, there is a proper class of ordinals $\gg$ such that for all $\ga\in\gb$, the powerset of $\aleph_{\gg+\ga\cdot\gw+5}$ has cardinality $\aleph_{\gg+\ga\cdot\gw+1}$ if $\ga\in A$, and $\aleph_{\gg+\ga\cdot\gw+7}$ if $\ga\notin A$.
\end{enumerate}

\noindent Another approach is to allow parameters in $V$ in the definition of $\hvd$ in the axiomatization. This change does not affect any proofs in the previous sections in any way. There are still other options. For definiteness, I will assume that the ground model $V$ satisfies (*).

For every formula $\phi$, consider the formula $\hat\phi$ with an additional parameter $\kappa$ defined in the following way: $\hat\phi(\bar v)$ states $\coll(\gw, <\kappa)\Vdash V(\mathbb{R})\models\phi(\bar v)$. In this formula, $V(\mathbb{R})$ stands for the smallest transitive model of ZF containing the ground model and $\power(\gw)$ of the $\coll(\gw, <\kappa)$-extension as subsets. In addition, each item $u$ on the variable list $\bar v$ to the right of the forcing relation should be equipped with a check, like so: $\check u$. It denotes the canonical name for $u$ as the element of the ground model. The following well-known fact appears inside the usual proofs of Solovay's measurability theorem \cite[Theorem 8.23]{schindler:set}, \cite[Lemma 26.17]{jech:newset}.

\begin{fact}
\label{solovayfact}
Let $y$ be a set of ordinals in $W$. Then

\begin{enumerate}
\item The model $V[y]$ is a generic extension of $V$ by a poset of cardinality smaller than $\kappa$;
\item for every formula $\phi$ with parameters in $V[y]$, $W\models\phi$ iff $V[y]\models\hat\phi$.
\end{enumerate}
\end{fact}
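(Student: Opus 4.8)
The statement is Solovay's classical pair of facts about the Lévy collapse, and its proof is the usual one; I only indicate the steps. Write $\mathbb P=\colk$ and recall the standard properties of the Lévy collapse at the inaccessible $\kappa$ (see \cite[Chapter~26]{jech:newset}): $\mathbb P$ has the $\kappa$-chain condition; $\mathbb P$ is weakly homogeneous, so that any sentence of the forcing language all of whose parameters are checks of ground-model sets is decided by the weakest condition; and $\mathbb P$ factors as $\mathbb P\cong\Coll(\gw,{<}\lambda)\times\Coll(\gw,[\lambda,\kappa))$ for every $\lambda<\kappa$, the second factor again being homogeneous, and, more importantly, $\mathbb P$ \emph{absorbs small forcing}: if $\mathbb C\in V$ has size $<\kappa$ and $h$ is $\mathbb C$-generic over $V$ with $h\in V[G]$, then $\kappa$ is still inaccessible in $V[h]$ and $V[G]$ is a $\colk^{V[h]}$-generic extension of $V[h]$. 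I also use the classical description of intermediate models of set forcing (Grigorieff): if $V\subseteq N\subseteq V[H]$ with $N\models\mathrm{ZFC}$, $N$ transitive with the ordinals of $V$, and $H$ generic over $V$ for a poset $Q$, then $N=V[H']$ for some $H'$ generic over $V$ for a complete subalgebra of the Boolean completion of $Q$. Granting these, the heart of the matter is a single reduction: every set of ordinals in $W$ already lies in $V[a]$ for some real $a\in V[G]$.

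To prove the reduction, fix a set $y$ of ordinals in $W=V(\R)$. Then $y$ is definable in $W$ from finitely many ordinals, finitely many reals, finitely many elements of $V$, and the parameter $\R$; since $\R=\power(\gw)^W$ is $W$-definable without parameters, and since finitely many reals (resp.\ $V$-sets) may be coded by a single one, there are a real $a_0$, a finite tuple $\bar\ga$ of ordinals, a set $v$ of ordinals in $V$, and a formula $\psi$ with $y=\{\xi\colon W\models\psi(\xi,a_0,\bar\ga,v)\}$. By the $\kappa$-chain condition a nice name for $a_0$ mentions $<\kappa$ coordinates, which are bounded below $\kappa$ by regularity, so $a_0\in V[G\restriction\lambda_0]$ with $G\restriction\lambda_0$ being $\Coll(\gw,{<}\lambda_0)$-generic over $V$ for some $\lambda_0<\kappa$. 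As $V\subseteq V[a_0]\subseteq V[G\restriction\lambda_0]$, the intermediate-model theorem yields $V[a_0]=V[h_0]$ for $h_0$ generic over $V$ for a poset of size at most $2^{\lambda_0}<\kappa$, so by absorption $V[G]$ is a $\colk^{V[a_0]}$-generic extension of $V[a_0]$. Moreover $a_0\in V(\R)^{V[G]}=W$ and $V\subseteq V[a_0]\subseteq W$, whence the smallest model of ZF inside $V[G]$ containing $V$ and $\R^{V[G]}$ coincides with the one containing $V[a_0]$ and $\R^{V[G]}$; that is, $W=(V[a_0])(\R)$ as computed in the $\colk^{V[a_0]}$-extension $V[G]$. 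The class $(V[a_0])(\R)$ is definable in $V[G]$ with parameters from $V[a_0]$, so for each ordinal $\xi$ the assertion ``$(V[a_0])(\R)\models\psi(\check\xi,\check a_0,\check{\bar\ga},\check v)$'' is a sentence of the $\colk^{V[a_0]}$-forcing language all of whose parameters are checks; by homogeneity its truth is decided by the weakest condition, hence independent of $G$, and therefore $y\in V[a_0]$.

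Clause (1) now follows: $V\subseteq V[y]\subseteq V[a_0]\subseteq V[G\restriction\lambda_0]$, and one more application of the intermediate-model theorem gives $V[y]=V[h]$ for $h$ generic over $V$ for a poset of size $<\kappa$. For clause (2), by (1) and absorption $\kappa$ is inaccessible in $V[y]$ and $V[G]$ is a $\colk^{V[y]}$-generic extension $V[y][G^{*}]$; since $y$ is a set of ordinals in $W$ and $V\subseteq W$, we have $V[y]\subseteq W$, and exactly as above $W=V(\R)^{V[G]}=(V[y])(\R)^{V[y][G^{*}]}$, the smallest model of ZF inside $V[G]$ containing $V[y]$ and $\R^{V[G]}$. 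Fix a formula $\phi$ with parameters $p\in V[y]$. Then $W\models\phi(p)$ iff $V[y][G^{*}]\models\big((V[y])(\R)\models\phi(p)\big)$, and since every parameter on the right is a check of a set in $V[y]$, homogeneity of $\colk^{V[y]}$ makes this equivalent to $V[y]\models\big(\colk\Vdash (V[y])(\R)\models\phi(\check p)\big)$. Read inside $V[y]$, ``$\colk$'' denotes $\colk^{V[y]}$ and the ``$V(\R)$'' occurring in the definition of $\hat\phi$ is, by that very definition, the least model of ZF of the $\colk$-extension containing the ground model $V[y]$ together with its $\power(\gw)$, i.e.\ exactly $(V[y])(\R)$. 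Hence the displayed statement is precisely $V[y]\models\hat\phi(p)$, as required.

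The only places where something beyond routine forcing bookkeeping is needed are the absorption property of the Lévy collapse, the intermediate-model theorem, and the point that the inner model written ``$V(\R)$'' is insensitive to enlarging the ground model from $V$ to any intermediate $V[a_0]$ or $V[y]$ that still sits inside $W$ with the same reals; all three are classical, and this is why, as the text promises, there is nothing novel in the proof.
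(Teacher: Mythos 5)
Your proof is correct. Note, however, that the paper itself offers no proof of this statement: it is labelled a \emph{Fact} and deferred to Solovay's classical work, so there is no in-paper argument to compare against. What you have written is the standard argument --- the $\kappa$-chain condition plus the intermediate-model theorem to get clause (1), and the absorption property of the L\'evy collapse plus weak homogeneity to get the reduction $y\in V[a_0]$ and clause (2) --- and it is exactly the argument the paper is implicitly relying on. The only steps you pass over lightly (that $V[a_0]\subseteq W$ and hence $(V[a_0])(\mathbb{R})=W$, and that $\kappa$ remains inaccessible after small forcing so that $\hat\phi$ reads correctly inside $V[y]$) are genuinely routine and you flag them appropriately.
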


It is part of the classical work of Solovay that ZF+DC holds in $W$. For the definability axiom, note that as $V$ satisfies the axiom of choice, the $\in$-relation on any transitive set in it is isomorphic to a binary relation on an ordinal in it. Therefore, $W$ internally sees itself as the smallest model of ZF containing all sets of ordinals. A standard constructibility hierarchy argument proves in ZF that in $L(\power(Ord))$, there is a definable surjection from $\power(Ord)\times Ord^{<\gw}$ onto $L(\power(Ord))$. This argument applied in $W$ shows that every set is definable from a set of ordinals (and additional ordinal parameters) there.

For the independence axiom, let $y_0\forkindep[x]y_1$ if $V[x, y_0]$ and $V[x, y_1]$ are mutually generic extensions of $V[x]$. The requisite properties of this relation must be checked one by one. First of all, the relation $\forkindep$ is definable without parameters. For this, it is enough to show that $\hvd^W=V$. The right-to-left inclusion follows from the (*) assumption above. For the left-to-right inclusion, suppose that $\phi(\bar v)$ is a formula with ordinal parameters and $\gb$ is an ordinal. It will be enough to show that the set $\{\ga\in\gb\colon W\models\phi(\ga, \bar v)\}$ (a typical inhabitant of $\hvd^W$) belongs to $V$. Indeed, the set is defined in $V$ as $\{\ga\in\gb\colon V\models\hat\phi(\ga, \bar v)\}$. 

Now, mutual genericity is clearly symmetric. Monotonicity and the transitivity property are easiest to verify by a mutual genericity criterion developed in \cite[Proposition 1.7.9]{z:geometric}: $V[x, y_0]$ and $V[x, y_1]$ are mutually generic if and only if any disjoint sets $a_0\in V[x, y_0]$ and $a_1\in V[x, y_1]$ can be separated by a set of ordinals in $V[x]$. For the monotonicity, if $y'_0$ is a set of ordinals definable from $x$ and $y_0$ as $y'_0=\{\ga\colon W\models\phi(\ga, x, y_0, \bar\gb)\}$ for some formula $\phi$ and a finite tuple $\bar\gb$ of ordinals, then $y'_0\in V[x, y_0]$ holds, as $y'_0=\{\ga\colon V[x, y]\models\hat\phi(\ga, x, y_0, \bar\gb)\}$ by Fact~\ref{solovayfact}. Thus, $V[x, y'_0]\subseteq V[x, y_0]$, and $V[x, z]$ and $V[x, y_1]$ are mutually generic as the above criterion is clearly hereditary to submodels. For the transitivity property, suppose that $x, y_0, y_1, z$ are sets of ordinals such that  $V[x, y_0]$ and $V[x, y_1]$ are mutually generic over $V[x]$ and $V[x, y_0, y_1]$ and $V[x, y_0, z]$ are mutually generic extensions of $V[x, y_0]$. Then, the mutual genericity of $V[x, y_0, z]$ and $V[x, y_1]$ is verified by the above criterion. Suppose that $a_0\in V[x, y_0, z]$ and $a_1\in V[x, y_1]$ are disjoint sets of ordinals. By the mutual genericity of $V[x, y_0, z]$ and $V[x, y_0, y_1]$ there is a set $b\in V[x, y_0]$ such that $a_0\subset b$ and $a_1\cap b=0$. By the mutual genericity of $V[x, y_0]$ and $V[x, y_1]$, there is a set $c\in V[x,]$ of ordinals such that $b\subseteq c$ and $a_1\cap c=0$. Then the set $c\in V[x]$ separates $a_0$ from $a_1$ as desired.

For the extension property, suppose that $x$ is a set of ordinals in $W$, and $A\in W$ is a nonempty set of sets of ordinals in $\vd_x$. Let $\phi$ be a formula with one free variable and parameter $x$ and some other parameters in $V$ defining $A$ in $W$. Let $y\in A$ be an arbitrary set. Since every set of ordinals is generic over $V$ by a poset of cardinality $<\kappa$, there must be a poset $P\in V[x]$ of cardinality $<\kappa$ and a $P$-name $\tau\in V[x]$ and a filter $g\subset P$ in $W$ such that $g$ is generic over $V[x]$ and $\tau/g=A$. By the forcing theorem, there must be a condition $p\in P$ such that $V[x]\models p\Vdash\hat\phi(\check x, \tau)$. In the model $W$, pick filters $g_0, g_1\in P$ mutually generic over $V$, both containing the condition $p$. Let $y_0=\tau/g_0$ and $y_1=\tau/g_1$; these are sets in the set $A$ such that $V[x][y_0]$ and $V[x][y_1]$ are mutually generic over $V[x]$ as desired.

For the product property, suppose that $x$ is a set of ordinals in $W$, $\beta$ is an ordinal, $A\subset\power(\gb)\times\power(\gb)$ is a $\vd_x$-set, and $\langle y_0, y_1\rangle\in A$ is a pair mutually generic over $V[x]$. Let $\phi$ be a formula with parameter $x$ defining the set $A$. This means that there are posets $P_0, P_1$ in $V[x]$ of cardinality $<\kappa$ and names $\tau_0, \tau_1$ such that $P_0\times P_1\Vdash\hat\phi(\tau_0, \tau_1, \check x)$. Let $B_0=\{z_0\colon\exists g_0\subset P_0\ g_0$ generic over $V[x]$ and $z_0=\tau_0/g_0\}$ and similarly for subscript $1$. It is not difficult to see that the sets $B_0$ and $B_1$ work.

The inaccessibility and Baire axioms are part of the classical Solovay's work. For the Gandy--Harrington axiom, first note that by the definability axiom, for every set $x$ of ordinals, every nonempty $\vd_x$-set has a nonempty subset which is an image of an $\vd_x$ function whose domain is an $\vd_x$-set of ordinals. To see this, let $A\in\vd_x$ be a nonempty set definable by some formula $\phi$: $A=\{a\colon\phi(a, x)\}$. Pick an element $a\in A$, a set $y$ of ordinals smaller than some ordinal $\gb$ and a formula $\psi$ defining $a$ from $y$. Define $F$ to be the set of all those pairs $\langle z, b\rangle$ such that $z\subset\gb$ is a set, $\psi$ defines $b$ from $z$, and $\phi(b, x)$ holds. $F$ is a nonempty $\vd_x$ function whose domain is a $\vd_x$-set of ordinals and range is a subset of $A$.

It follows that it is enough to verify the Gandy--Harrington axiom for sets of sets of ordinals. Let $x\subset\gb$ be a set of ordinals, and $A$ a nonempty $\vd_x$-set of sets of ordinals, $A=\{y\subset\ga\colon\phi(y, x)\}$. This is the first move of Player I in the Gandy--Harrington game, $A=A_1$. Working in $V[x]$, find a poset $P$ of cardinality $<\kappa$ and a $P$-name $\tau$ such that $V[x]\models P\Vdash\hat\phi(\tau,\check x)$. To proceed, Player II will need an enumeration $\{D_i\colon i\in\gw\}$ of all dense open subsets of $P$ in the model $V[x]$ (the enumeration will itself be an element of the Solovay model surrounding $V[x]$).  As the play proceeds, Player II will produce a descending sequence $\langle p_i\colon i\in\gw\rangle$ of conditions such that $p_{i+1}\in D_i$ and play sets $A_{2i+1}$ in such a way that $A_{2i+1}=\{\tau/g\colon g\subset P$ is a filter generic over $V[x]$ containing the condition $p_i$. In the first round, $p_0\in P$ is arbitrary. Suppose that the condition $p_i$ has been found and the set $A_{2i+1}$ has been played. Player I answers with a nonempty subset $A_{2i+2}$. 

I claim that there is a condition $q\leq p_i$ in $D_i$ such that $\{\tau/g\colon g\subset P$ is a filter generic over $V[x]$ and $q\in g\}\subset A_{2i+2}\}$. To see this, pick any filter $g\subset P$ generic over $V[x]$, with $p_i\in g$, such that $\tau/g\in A_{2i+1}$. As this is a $\vd_x$-set, there must be a formula $\psi$ such that $A_{2i+2}=\{y\subset\ga\colon\psi(y, x)\}$, and by the forcing theorem and Fact~\ref{solovayfact}, there must be a condition in the filter $g$ which forces $\hat\psi(\tau, \check x)$. Then any stronger condition which is in $D_{i+1}$ and below $p_i$ will work. Player II will select the first condition satisfying the recursion demands in the previous paragraph in some fixed well-ordering of $V[x]$. This completes the description of the strategy. In the end, Player II must have won: writing $g$ for the filter on $P$ generated by his sequence of conditions, $\tau/g\in\bigcap_iA_i$ holds.

As a final remark, I prove a Kim-Pillay style theorem regarding the $\forkindep$ relation in the Solovay model.

\begin{proposition}
	\label{kimpillayproposition}
	In the Solovay model, if $\forkindep$ is any relation satisfying the independence axiom, then it is the relation $y_0\forkindep_xy_1$ iff $V[x][y_0]$, $V[x][y_1]$ are mutually generic extensions of $V[x]$.
\end{proposition}

\begin{proof}
	Note that the initial assumptions on the ground model used in the construction of the Solovay model imply that $V[x]=\hvd_x$ for any set $x$ of ordinals in it. Now, let $x, y_0, y_1$ be any sets of ordinals. Fact~\ref{solovayfact}(1) shows that $V[x][y_0]$ and $V[x][y_1]$ are both generic extensions of $V[x]$ via a poset of cardinality smaller than $\kappa$; i.e. a poset which is countable in the Solovay model. Corollary~\ref{genericitycorollary} then shows that $y_0\forkindep[x]y_1$ is equivalent to $V[x][y_0], V[x][y_1]$ being mutually generic extensions of $V[x]$ as desired.
\end{proof}

\begin{question}
	Can the uniqueness of $\forkindep$ be proved just from the axiomatization?
\end{question}

\bibliographystyle{plain} 
\bibliography{odkazy,zapletal,shelah}

\end{document}